% %%%%%%%%%%%%%%%%%%%%%%%%%%%%%%%%%%%%%%%%%%%%%%%%%%%%%%%%%%%%%%%%%%%%%%%%%%%%%%%
% 2345678901234567890123456789012345678901234567890123456789012345678901234567890
% 1         2         3         4         5         6         7         8

%\documentclass[12pt,draftcls,onecolumn]{IEEEtran}
\documentclass[10pt,onecolumn]{IEEEtran}

\usepackage{graphics} % for pdf, bitmapped graphics files
\usepackage{epsfig} % for postscript graphics files

\usepackage{times,verbatim} % Please do not comment this
\usepackage{amsmath, amssymb}
\usepackage{amsthm}
\usepackage{thmtools}
\usepackage{graphicx}
\usepackage{subfigure}
\usepackage{epstopdf}
\usepackage{algorithm}
\usepackage[noend]{algorithmic}
\usepackage{enumerate}
\usepackage{color}

\usepackage{standalone}

\usepackage{todonotes}
\usepackage{filecontents}

\newtheorem{theorem}{Theorem}[section]
\newtheorem{lemma}[theorem]{Lemma}

\newtheorem{problem}[theorem]{Problem}
\newtheorem{proposition}[theorem]{Proposition}

\newtheorem{definition}[theorem]{Definition}

\newtheorem{remark}[theorem]{Remark}
\numberwithin{equation}{section}
\newtheorem{defProp}[theorem]{Definition (Proposition)}

\newcommand{\R}{{\mathbb{R}}}

\newcommand{\B}{{\mathbb B}}

\newcommand{\N}{{\mathbb{N}}}

\newcommand{\argmin}{\textrm{arg}\min}

\renewcommand{\O}{\mathcal{O}}
\newcommand{\M}{\mathcal{M}}

 % \tth(\theta) is the triggering time associated with the
% \theta_i's

\renewcommand{\tilde}[1]{\widetilde{#1}}

\newcommand{\half}{\frac{1}{2}}
\newcommand{\norm}[1]{\left\Vert#1\right\Vert_2}
\renewcommand{\matrix}[1]{\begin{bmatrix}#1\end{bmatrix}}

\newcommand{\supp}{\textrm{supp}}

\usepackage{tikz}
\usetikzlibrary{shapes,shadows,calc,snakes}
\usetikzlibrary{decorations.pathmorphing,patterns}
\usetikzlibrary{decorations}
%\usetikzlibrary{decorations.mypathmorphing}
\usepackage{pgfplots}

%\usepackage{ulem} % Paulo Tabuada 04/29/2012, remove for the final version.

%\title{\LARGE \bf 
%Secure Estimation For Linear Systems Under Sensor Attacks \\Using Satisfiability Modulo Theories \\(\r{\textbf{@Alberto and Pierluigi} is it a good title ?})
%}
\title{\LARGE \bf 
Secure State Estimation For Cyber Physical Systems Under \\Sensor Attacks: A Satisfiability Modulo Theory Approach
}

\author{Yasser Shoukry, Pierluigi Nuzzo,  Alberto Puggelli, 
\\Alberto L. Sangiovanni-Vincentelli, Sanjit A. Seshia, and Paulo Tabuada % <-this % stops a % space
\thanks{Y. Shoukry and P. Tabuada with Electrical
Engineering Department, UCLA, {\tt\footnotesize \{yshoukry, tabuada\}@ucla.edu}}%
\thanks{P. Nuzzo, A. Puggelli,  A. L. Sangiovanni-Vincentelli, and S. A. Seshia are with Electrical Engineering and Computer Science Department, UC Berkeley, {\tt\footnotesize \{puggelli,nuzzo,alberto,sseshia\}@eecs.berkeley.edu}}
\thanks{This work was partially sponsored by the NSF award 1136174, by DARPA under agreement number FA8750-12-2-0247, by TerraSwarm, one of six centers of STARnet, a Semiconductor Research Corporation program sponsored by MARCO and DARPA, and by the NSF project ExCAPE: Expeditions in Computer Augmented Program Engineering (award 1138996). The U.S. Government
is authorized to reproduce and distribute reprints for Governmental purposes
notwithstanding any copyright notation thereon. The views and conclusions
contained herein are those of the authors and should not be interpreted
as necessarily representing the official policies or endorsements, either
expressed or implied, of NSF, DARPA or the U.S. Government.
}% <-this % stops a % space
}

\begin{document}

\maketitle
%\thispagestyle{empty}
%\pagestyle{empty}

% %%%%%%%%%%%%%%%%%%%%%%%%%%%%%%%%%%%%%%%%%%%%%%%%%%%%%%%%%%%%%%%%%%%%%%%%%%%%%%%
\begin{abstract}
Secure state estimation is the problem of estimating the state of a dynamical system from a set of noisy and adversarially corrupted measurements. The secure state estimation is a combinatorial problem, which has been addressed either by brute force search, suffering from scalability issues, or via convex relaxations using algorithms that can terminate in polynomial time but are not necessarily sound. In this paper, we present a novel algorithm  that uses a Satisfiability-Modulo-Theory approach to lessen the intrinsic combinatorial complexity of the problem. By leveraging results from formal methods over real numbers, we provide guarantees on the soundness and completeness of our algorithm. Moreover, we provide upper bounds on the runtime performance of the proposed algorithm in order to proclaim the scalability of the proposed algorithm. The scalability argument is then supported by numerical simulations showing an order of magnitude decrease in the runtime performance with alternative techniques.  Finally, we demonstrate its application to the problem of controlling an unmanned ground vehicle.
\end{abstract}

%%%%%%%%%%%%%%%%%%%%%%%%%%%%%%%%%%%%%%%%%%%%%%%%%%%%%%%%%%%%%%%%%%%%%%%%%%%%%%%%
\section{Introduction} \label{sec:intro}

The detection and mitigation of attacks on Cyber-Physical Systems (CPS) is a problem of increasing importance. The tight coupling between ``cyber'' components and ``physical'' processes often leads to systems where the increased sophistication comes at the expense of increased vulnerability and security weaknesses. 
An important scenario is posed by a malicious adversary that can arbitrarily corrupt the measurements of a subset of sensors in the system. These sensor-related attacks can be deployed in any of the following components of a real-life CPS:
\begin{enumerate}
	\item \emph{Software.} Malicious software running on the processor executing the sensor processing routine can access the sensor information before it is processed by the controller itself. 
	The Stuxnet malware is an infamous example of this category of attacks. It exploits vulnerabilities in the operating system running over SCADA devices \cite{stuxnet} and once it obtains enough operating system privileges, it can corrupt the sensor measurements collected via the attacked SCADA device.
	\item \emph{Network.} Modern control systems rely on a networked infrastructure to exchange sensor information. Therefore, 
	 an adversarial attacker can corrupt sensor measurements by manipulating the data packets exchanged between various components, as has been investigated, for instance, in smart grids~\cite{LiuPowerAttack}.
	\item \emph{Sensors Spoofing.}  By tampering with the sensor hardware and/or environment, an adversary can mislead the sensor about the value of the physical signal it is attempting to measure.
	As previously shown by some of the authors, it is possible to make drivers
lose control of their cars by directly spoofing the velocity sensors of anti-lock
braking systems in a non-invasive manner~\cite{YasserABS}.
\end{enumerate}

This paper addresses the problem of detecting and mitigating the effects of an adversarial corruption of sensory data in a linear dynamical system. While detection is concerned with determining which sensors are under attack, mitigation is concerned with the ability to estimate the state of the underlying physical system from corrupted measurements, so that it can be used by the controller. We call the latter problem \emph{secure state estimation}. We focus on linear dynamical systems and model the attack as a sparse vector added to the measurement vector. The entries corresponding to unattacked sensors are null while sensors under attack are corrupted by non-zero signals. We make no assumptions regarding the magnitude, statistical description, or temporal evolution of the attack vector.

%
%\r{\textbf{Alberto and Pierluigi:} Probably mention about the $2s$-sparse condition here in an intuitive way.}

%In recent years, multiple solutions have been reported to the secure state estimation problem for linear systems. In addition to fast execution time, a key  requirement of an estimation algorithm is to provide formal guarantees of \emph{soundness} (i.e., if the algorithm returns a state estimate, then the system lies indeed in that state) and \emph{completeness} (i.e., if the system state can be estimated, then the algorithm is indeed able to find such an estimate). 

While some prior work focused on the special cases of scalar system \cite{Bai_Gupta} and/or special structure on the attack signal (e.g. replay attacks in~\cite{YilinAllerton}), the work reported in this work focuses on the case when the underlying system is multi-dimensional, equipped with multiple sensors and without assumptions on the knolwedge of the time evolution of the attack signal. In such case, the \emph{secure state estimation} problem becomes a combinatorial problem~\cite{Hamza_TAC,Bullo_TAC,Joao_ACC}. We can broadly categorize the prior work in this area based on the technique used to tackle the combinatorial aspect of the problem into two wide categories (i) brute force search and (ii) convex relaxation.

The work reported in~\cite{Bullo_TAC, Joao_ACC} are representative of the first class; brute force search. Pasqualetti~et~al.~\cite{Bullo_TAC} provide a suite of sound and complete algorithms to generate fault-monitor filters, which can be used to detect the existence of an attack. However, if only an upper bound on the cardinality of the attacked sensors is available, the number of needed monitors is combinatorial in the size of the attacked sensors, which might hinder the scalability of the approach.  To avoid running a combinatorial set of parallel monitors, Chong~et~al.~\cite{Joao_ACC} shows how all the monitors can be combined into a single multi-observer with a combinatorial number of outputs. The algorithm reported in their work searches over all the outputs in order to discover which sensors are under attack.

For the convex relaxation approach, prior work reported in~\cite{Hamza_TAC} (for the case where sensors are ideal and not affected by noise) and~\cite{Pajic_ICCPS} (for the noisy case) shows how to formulate the secure state estimation problem as a non-convex $l_0$ minimization problem and then relax it into a convex a convex $l_r/l_1$ problem, which can be solved in polynomial time. The major drawback of this relaxation step is the loss of correctness guarantees. In particular, we show experimental results, at the end of this paper, for which the relaxed $l_r/l_1$  leads to incorrect results. To avoid the relaxation step while obtaining an algorithm that runs in polynomial time, an alternative formulation was proposed in~\cite{YasserCDC_ET,YasserETPGarXiv}.  However, correctness of the results obtained by the proposed algorithms are guaranteed only when restrictive conditions are satisfied by the system structure.

%One approach to  secure state estimation is to formulate the problem as a non-convex $l_0$ minimization problem when sensor measurements are noiseless~\cite{Hamza_TAC}, or when they are affected by noise~\cite{Pajic_ICCPS}. To improve the efficiency of the estimation algorithm, the $l_0$ minimization problem is then relaxed into a convex $l_r/l_1$ problem, which can be solved in polynomial time. Nonetheless, due to the convex relaxation step, this approach is not sound, i.e., it may return an incorrect estimate. 
%To avoid the relaxation step altogether, an alternative formulation of the state estimation problem, which can be solved by using time-efficient projected gradient techniques,
%has also been proposed~\cite{YasserCDC_ET,YasserETPGarXiv}.
%However, restrictive conditions must be satisfied by the system structure to guarantee the soundness and completeness of this algorithm, thus limiting its applicability.
%
%
%
%Finally, Pasqualetti~et~al.~\cite{Bullo_TAC} provide a suite of sound and complete algorithms to generate fault-monitor filters, which can be used to detect the existence of an attack. However, if only an upper bound on the cardinality of the attacked sensors is available, the number of needed monitors is combinatorial in the size of the attacked sensors, which might hinder the scalability of the approach.  

Another suite of algorithms are also proposed for the secure state estimation without any formal guarantees on their correctness. For example, a technique that relies on an on-line learning mechanism based on approximate envelopes of collected data has also been recently reported~\cite{Tiwari_HiCONS}. The envelopes are used to detect any abnormal behavior without assuming any knowledge of the dynamical system model. Another techniques are proposed in \cite{BoydKF} and \cite{Georgios_TSP2} in which 
robustification approaches for state estimation (using either Kalman Filters or Principal Component Analysis) against sparse sensor attacks are proposed, again with no guarantees on their correctness.

%However, no formal guarantee of completeness is provided regarding the ability to detect and mitigate attacks. 

In this work, we resort to techniques from formal methods to develop a \emph{sound and complete} algorithm that can \emph{efficiently} handle the combinatorial complexity of the state estimation problem.  We show that the state estimation problem can be cast as a 
satisfiability problem for a formula including logic and pseudo-Boolean constraints on Boolean variables as well as convex constraints on real variables. 
The Boolean variables model the presence (or absence) of an attack, while the convex constraints capture properties of the system state. We then show how this satisfiability problem can be efficiently solved using the \emph{Satisfiability-Modulo-Theories} (SMT) paradigm~\cite{smtbook}, specifically adapted to convex constraint solving~\cite{CalCS}, to provide both the index of attacked sensors and the state estimate.
To improve the execution time of our decision procedure, we equip the convex constraint solver of our SMT-based algorithm with heuristics that can exploit the specific geometry of the state estimation problem. 
%We then leverage notions from formal methods over real numbers~\cite{deltaComplete} to provide guarantees on the soundness and completeness of our algorithm. 
Finally, we  compare the  performance of our approach   against other algorithms via numerical experiments, and demonstrate its effectiveness on the problem of controlling an Unmanned Ground Vehicle (UGV).

Technically, we make the following contributions:
\begin{itemize}
\item We formalize the \emph{secure state estimation} problem as a satisfiability problem which includes both boolean constraints and convex constraints over real variables.
\item We provide \textsc{Imhotep\footnote{Imhotep: (pronounced as ``emmo-tepp'') was an ancient Egyptian polymath who is considered to be the earliest known architect, engineer and physician in the early history. He is famous of the design of the oldest pyramid in Egypt; Pyramid of Djoser (the Step Pyramid) at Saqqara, Egypt, 2630 -- 2611 BC.}-SMT}; a novel SMT-solver that is shown, formally, to provide a sound and complete solution to the \emph{secure state estimation} problem.
\item We propose heuristics to improve the execution time of the \textsc{Imhotep-SMT} solver along with  the real-time guarantees given as the upper bounds on the number of iterations required by the proposed algorithm.
\end{itemize}

The rest of this paper is organized as follows. Section \ref{sec:problem} introduces the formal setup for the problem under consideration. The main contributions of this paper -- the introduction of the SMT-based detector and the characterization of its soundness and completeness -- are presented in Section \ref{sec:smt} and Section \ref{sec:comp}. Numerical comparisons and results are then shown in Section \ref{sec:results}. Finally, Section \ref{sec:conclusion} concludes the paper and discusses new research directions. 
%%%%%%%%%%%%%%%%%%%%%%%%%%%%%%%%%%%%%%%%%%%%%%%%%%%%%%%%%%%%%%%%%%%%%%%%%%%%%%%%

\section{The Secure State Estimation Problem}
\label{sec:problem}

We provide a mathematical formulation of the state estimation problem considered in this paper and discuss the conditions for the existence and uniqueness of the solution.
\subsection{Notation}
The symbols $\N, \R$ and $\B$ denote the sets of natural, real, and Boolean numbers, respectively. The symbols $\land$ and $\lnot$ denote the logical AND and logical NOT operators, respectively.
The support of a vector $x\in \R^n$,  denoted by $\supp(x)$, is the set of indices of the non-zero elements of $x$. Similarly, the complement of the support of a vector $x$ is denoted by $\overline{\supp(x)} = \{1,\hdots,n\}\setminus\supp(x)$.
If $S$ is a set, $|S|$ is the cardinality of $S$. 
We call a vector $x \in \R^n$ $s$-sparse, if  $x$ has at most $s$ nonzero
elements, i.e., if  $\vert \supp(x)\vert \le s$. 
For a vector $x \in \R^n$, we denote by $\norm{x}$ the $2$-norm of $x$  and by $\norm{M}$ the induced $2$-norm of a matrix $M \in \R^{m \times n}$. We also denote by $M_i \in \R^{1\times n}$ the $i${th} row of $M$. For the set \mbox{$\Gamma \subseteq \{1, \hdots, m\}$}, we denote by
$M_{\Gamma} \in \R^{\vert \Gamma\vert \times n}$ the matrix obtained from $M$ by  removing all the rows except those
indexed by $\Gamma$. Then, $M_{\overline{\Gamma}} \in
\R^{(m-|\Gamma|) \times n}$ is the matrix obtained from $M$ by removing the rows indexed by
the set $\Gamma$, $\overline{\Gamma}$ representing the complement of $\Gamma$. For example, if $m = 4$, and $\Gamma = \{1,2\}$, we have
$$M_\Gamma =\matrix{M_1 \\ M_2}, \quad M_{\overline{\Gamma}} =
\matrix{M_3\\M_4}.$$

%%%%%%%%%%%%%%%%%%%%%%%%%%%%%%%%%%%%%%%%%%%%%%%%%%%%%%%%%%%%%%%%%%%%%%%%%%%%%%%%
%\clearpage

\subsection{System and Attack Model}

We consider a system under sensor attack of the form:
\begin{align}
\Sigma_a  \quad
\begin{cases}
	x^{(t+1)} &= A x^{(t)} + B u^{(t)}, \\
	y^{(t)} &= C x^{(t)} + a^{(t)} + \psi^{(t)}
\end{cases}
\label{eq:system_attack}
\end{align} 
where $x^{(t)} \in \R^n$ is the system state at time  $t \in \N$, $u^{(t)} \in \R^m$ is the system input, and $y^{(t)} \in \R^p$ is the observed output. The matrices $A, B$, and $C$ represent the system dynamics and have appropriate dimensions. 
The attack vector $a^{(t)} \in \R^p$ is an $s$-sparse vector modeling how an attacker changed the sensor measurements at time $t$.  If sensor $i \in \{1, \hdots ,p\}$ is attacked then
the $i${th} element in $a^{(t)}$ is non-zero; otherwise the $i${th}
sensor is not attacked. Hence, $s$ describes the number of attacked sensors. 
Note that we make no assumptions on the vector $a^{(t)}$ apart from being $s$-sparse. 
In particular, we do not assume bounds, statistical properties, nor restrictions on 
the time evolution of the elements in $a^{(t)}$. The value of $s$ is also not
assumed to be known, although we assume the knowledge of an upper bound $\overline{s}$ on the maximum number of sensors that can be attacked.
Finally, the vector $\psi^{(t)} \in \R^p$ represents the measurement noise, which is assumed to be bounded.

\subsection{Problem Formulation} \label{sec:formulation}

To formulate the state estimation problem, we assume the state is recountsucted from a set of $\tau\in \N$ measurements,  where $\tau \le n$ is selected to guarantee that  the system observability matrix, as defined below, has full rank. Therefore, we can arrange the outputs from the $i${th} sensor at different time instants as follows:
$$
\tilde{Y}_i^{(t)} = \O_{i} x^{(t-\tau+1)} + E_i^{(t)} + F_i U^{(t)} + \Psi_i^{(t)}, 
$$
where:
\begin{align*}
	\tilde{Y}_i^{(t)} &= \matrix{
		y_i^{(t-\tau+1)} \\ y_i^{(t-\tau)} \\ \vdots \\ y_i^{(t)}
	}, 
	E_i^{(t)}  =\matrix{ 
		a_i^{(t-\tau+1)} \\ a_i^{(t-\tau)} \\ \vdots \\ a_i^{(t)}
	}, 
	\Psi_i^{(t)}  =\matrix{ 
		\psi_i^{(t-\tau+1)} \\ \psi_i^{(t-\tau)} \\ \vdots \\ \psi_i^{(t)}
	}, 
	U^{(t)} = \matrix{ u^{(t-\tau+1)} \\ u^{(t-\tau+2)} \\ \vdots \\ u^{(t)} },
	\O_i = \matrix{ C_i \\ C_iA \\ \vdots \\ C_iA^{\tau-1} },\\
	F_i &= \matrix{ 0 & 0 & \hdots & 0 & 0\\ C_i B & 0 & \hdots & 0 & 0 \\ \vdots &
	& \ddots &  & 
	\vdots
	\\
	C_i A^{\tau-2}B & C_i A^{\tau-3}B & \hdots & C_i B & 0 }
	.
\end{align*}
Since all the inputs in $U^{(t)}$ are known, we can further simplify
the output equation as:
\begin{equation} \label{eq:y_equality}
Y_i^{(t)}=\O_{i} x^{(t-\tau+1)} + E_i^{(t)} + \Psi_i^{(t)},
\end{equation}
where $Y_i^{(t)}=\tilde{Y}_i^{(t)}-F_iU^{(t)}$. We also define:
\begin{align}
 Y^{(t)} = \matrix{Y_1^{(t)} \\ \vdots \\ Y_p^{(t)}}, \quad E^{(t)} = \matrix{E_1^{(t)} \\ \vdots \\ E_p^{(t)}}, \quad \O = \matrix{\O_{1} \\ \vdots \\ \O_{p}}
 \label{eq:O}
 \end{align}
to denote, respectively, the vector of outputs, attacks and observability matrices related to all sensors over the same time window of length $\tau$.  Here, with some abuse of notation, $Y_i, E_i$ and $\O_{i}$
are used to denote the $i${th} block of $Y, E$ and $\O$. Then, by the same abuse of notation, we also denote by $Y_{\Gamma}, E_{\Gamma}$, and $\O_{\Gamma}$ the blocks indexed by the elements in the set $\Gamma$.

\subsection{Problem Statement}
For each individual sensor, we define a binary indicator variable $b_i \in \B$ such that $b_i = 0$ when the $i${th} sensor is attack-free and $b_i = 1$ otherwise. Based on the formulation in Sec.~\ref{sec:formulation}, our goal is to find $x^{(t-\tau+1)}$ in~\eqref{eq:y_equality}, knowing that:
\begin{enumerate}
\item if a sensor is attack-free (i.e., $b_i = 0$), then \eqref{eq:y_equality} reduces to  $Y_i^{(t)} - \O_{i} x^{(t-\tau+1)} = \Psi_i^{(t)}$; 
\item $\Psi_i$ is the upper bound on the norm of the noise at sensor $i$, i.e.,
$$\norm{\Psi_i^{(t)}}\le \Big\Vert \Psi_i\Big\Vert_2, \quad \forall t \in \N$$
\item the maximum number of attacked sensors is $\overline{s}$. 
 \end{enumerate}
 Therefore, using the binary variables $b_i$, we can pose the problem of secure state estimation as follows.

\begin{problem}{\textbf{(Secure State Estimation)}}
For the linear control system under attack $\Sigma_a$ (defined by~\eqref{eq:system_attack}), construct an estimate $\eta = (x, b) \in \R^n \times \B^p$  such that $\eta \models \phi $, i.e., $\eta$ satisfies the formula $\phi$, where $\phi$ is defined as:
\begin{align*}  
\phi ::= &\bigwedge_{i = 1}^p  \Bigg(\lnot b_i  \Rightarrow \norm{Y_i - \O_{i} x} \le \norm{\Psi_i} \Bigg) \quad \bigwedge \left(\sum_{i = 1}^p b_i \le \overline{s}\right). \notag
\end{align*}
\label{prob:sse}
\end{problem}
\noindent  In Problem~\eqref{prob:sse}, $Y_i$, $\norm{\Psi_i}$ and $\O_{i}$ are the vectors of outputs, measurement noise bound and the observability matrix related to sensor $i$, as defined in Sec.~\ref{sec:formulation}. The first conjunction of constraints requires $(Y_i - \O_{i} x)$ to be bounded only by the noise bound if sensor $i$ is attack-free. We resort to the 2-norm of $(Y_i - \O_{i} x)$ since the only information we have available about the noise is a bound on its 2-norm. The second inequality enforces the cardinality constraint on the number of attacked sensors. 

We drop the time $t$ argument in Problem~\eqref{prob:sse} since the satisfiability problem is to be solved at every time instance.  Note that although we reconstruct a delayed version of the state $x^{(t-\tau+1)}$, we can always reconstruct the current state $x^{(t)}$ from $x^{(t-\tau+1)}$ by recursively rolling the dynamics forward in time. 

The secure state estimation problem~\ref{prob:sse} does not ask for the minimal number of attacked sensors for which the estimated state matches the measured output. That is, if $b^*$ is the vector of indicator variables characterizing the actual attack, any assignment $\eta = (x, b) \models \phi$ with $\supp(b^*) \subseteq \supp(b)$ is a valid solution for Problem~\ref{prob:sse}.
Therefore, it is useful to modify Problem~\ref{prob:sse} to ask for the minimal number of attacked sensors that explains the collected measurements as follows.

\begin{problem}{\textbf{(Minimal Attack Support)}}
For the linear control system under attack $\Sigma_a$ construct the estimate \mbox{$\eta = (x, b) \in \R^n \times \B^p$}  obtained as the solution of the optimization problem:
\begin{align*}
\min_{(x,b)\in \R^n \times \B^p} & \; \sum_{i = 1}^{p}  {b}_i \qquad
\text{s.t.}  \; \bigwedge_{i =1}^p \Bigg(\lnot {b}_i  \Rightarrow \norm{Y_i - \O_{i} x } \le \norm{\Psi_i} \bigg).
\end{align*}
\label{prob:mas}
\end{problem}

It is straightforward to show that the solution to Problem \ref{prob:mas} can be obtained by performing a binary search over $s$ and invoking a solver for Problem~\ref{prob:sse} at each step, starting with $s = \overline{s}$ and then decreasing $\overline{s}$ until Problem~\ref{prob:sse} becomes unfeasible or $\overline{s}=0$. Since any solution of~\eqref{eq:y_equality}  must necessarily satisfy the constraints of Problem~\ref{prob:sse}, such a procedure will terminate by returning the solution with the minimal attack support. We denote this solution as \emph{minimal support solution}. In the reminder of the paper, we will focus on the analysis of the feasibility problem~\ref{prob:sse}, since a solution to the optimization problem~\ref{prob:mas} can be obtained by solving a sequence of instances of Problem~\ref{prob:sse}. 

In Sec.~\ref{sec:guarantees}, we discuss the conditions for the uniqueness of the minimal support solution 
of  Problem~\ref{prob:mas}.
However, we first recall that the satisfiability problem over real numbers, and specifically~over $\R^n$, is inherently intractable, i.e., decision algorithms for formulas with non-linear polynomials already suffer from high complexity~\cite{BrownRealSAT,CollinsRealSAT}. 
Moreover, linear programming and convex programming solvers usually perform floating point (hence inexact) calculations, which may be inadequate for some applications.
Therefore, to provide formal guarantees about correctness of Problem~\ref{prob:sse}, we resort to the notion of $\delta$-completeness which was previously used in~\cite{deltaComplete}.

\begin{definition}[\textbf{Soundness and Completeness of Decision Algorithms for Problem \ref{prob:sse}}]
\label{def:deltacom}
Let a minimal solution \mbox{$\eta^* = (x^*,b^*)$} (the true state and indicator variables) exist for Problem~\ref{prob:sse}.  Then, 
a solution $\eta = (x,b) \models \phi$ is said to $\delta$-satisfy $\phi$ (or $\delta$-SAT for short) if $\supp(b^*) \subseteq \supp(b)$ and $\norm{x^* - x}^2 \le \delta$ for some $\delta \in \R$. Moreover, an algorithm that solves Problem~\ref{prob:sse} is said to be $\delta$-\emph{complete} if it returns a $\delta$-SAT solution.
\end{definition}

Definition \ref{def:deltacom} asks for an algorithm which terminates and returns a solution $\eta = (x, b)$ that is correct (up to the tolerance $\delta$). Hence, a $\delta$-\emph{complete} decision algorithm in the sense of Definition \ref{def:deltacom} is also ($\delta$-)sound since, if it returns a solution $\eta$, 
$\eta$ is actually a $\delta$-SAT solution.

%%%%%%%%%%%%%%%%%%%%%%%%%%%%%%%%%%%%%%%%%%%%%%%%%%%%%%%%%%%%%%%%%%%%%%%%%%%%%%%%

\subsection{Uniqueness of Minimal Support Solutions} \label{sec:guarantees}

To characterize the existence and uniqueness of solutions to Problem \ref{prob:mas}, 
we recall  the notion of $s$-sparse observability~\cite{YasserETPGarXiv}.

\begin{definition}{\textbf{($s$-Sparse Observable System)}}
The linear control system $\Sigma_a$, defined by~\eqref{eq:system_attack},
is said to be $s$-sparse observable
if for every set $\Gamma\subseteq\{1,\hdots,p\}$ with $|\Gamma| = s$, the system $\Sigma_{\overline{\Gamma}} $ is observable, where $\Sigma_{\overline{\Gamma}}$ is defined as:\begin{align}
\Sigma_{\overline{\Gamma}}   \quad
\begin{cases}
	x^{(t+1)} &= A x^{(t)} + B u^{(t)}, \qquad t \in \N\\
	y^{(t)} &= C_{\overline{\Gamma}} x^{(t)}
\end{cases}.
\end{align} 
\end{definition}
\noindent In other words, a system is $s$-sparse observable if it is  observable from any choice of $p  - s$ sensors. For $2\overline{s}$-sparse observable systems, the following result holds. 

\begin{theorem}{\textbf{(Existence and Uniqueness of the Solution)}}[Theorem III.2 in \cite{YasserETPGarXiv}] 
In the noiseless case ($\Psi_i = 0$ for all $i\in \{1,\ldots, p\}$),  Problem \ref{prob:mas} admits a unique solution $\eta^* = (x^*,b^*)$ if and only
if the dynamical system $\Sigma_a$ defined by \eqref{eq:system_attack} is $2\overline{s}$-sparse observable.
\label{th:sparse}
\end{theorem}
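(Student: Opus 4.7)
The plan is to reformulate Problem \ref{prob:mas} in terms of the ambiguity between feasible state/attack pairs, observe that any such ambiguity corresponds to a vector lying in the kernel of a partial observability matrix $\O_{\overline{\Gamma}}$ with $|\Gamma|\le 2\overline s$, and then match this kernel condition with the definition of $2\overline s$-sparse observability. The argument naturally splits into two implications.

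First I would prove sufficiency ($2\overline{s}$-sparse observable $\Rightarrow$ uniqueness). Let $(x^*_1,b^*_1)$ and $(x^*_2,b^*_2)$ be two minimal support solutions. Set $\Gamma := \supp(b^*_1)\cup\supp(b^*_2)$, so $|\Gamma|\le |b^*_1|+|b^*_2|\le 2\overline s$. For each $i\notin\Gamma$ we have $b^*_{1,i}=b^*_{2,i}=0$, hence in the noiseless case $Y_i = \O_i x^*_1 = \O_i x^*_2$, which gives $\O_{\overline{\Gamma}}(x^*_1-x^*_2)=0$. Since $2\overline s$-sparse observability implies $s$-sparse observability for every $s\le 2\overline s$, the matrix $\O_{\overline{\Gamma}}$ has trivial kernel, so $x^*_1=x^*_2$. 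Minimality then forces $\supp(b^*_j)=\{i:Y_i\ne \O_i x^*_j\}$ for $j=1,2$, so $b^*_1=b^*_2$.

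Next I would prove necessity by contraposition: assume the system is \emph{not} $2\overline s$-sparse observable and exhibit a measurement vector $Y$ for which Problem~\ref{prob:mas} has two distinct minimizers. By failure of sparse observability, there exists $\Gamma\subseteq\{1,\ldots,p\}$ with $|\Gamma|=2\overline s$ and a nonzero $w\in\ker\O_{\overline{\Gamma}}$. Partition $\Gamma = \Gamma_1\sqcup\Gamma_2$ with $|\Gamma_1|=|\Gamma_2|=\overline s$, and choose attack vectors $E_1,E_2$ supported on $\Gamma_1,\Gamma_2$ respectively satisfying $E_1-E_2=\O w$ (which is possible because $\O w$ is supported on $\Gamma=\Gamma_1\cup\Gamma_2$, so its components can be split between the two sides of the identity). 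Setting $Y:=E_1$, both pairs
\begin{equation*}
(x_1,b_1)=(0,\mathbb{1}_{\supp(E_1)}), \qquad (x_2,b_2)=(w,\mathbb{1}_{\supp(E_2)})
\end{equation*}
are feasible for Problem~\ref{prob:sse} with attack cardinality at most $\overline s$, and by choosing $w$ so that $\O_i w\ne 0$ for every $i\in\Gamma$ (which can be arranged by generically perturbing $w$ within $\ker\O_{\overline{\Gamma}}$ and slightly adjusting $\Gamma$) both attack supports have the same size. Applying the sufficiency argument in reverse to any hypothetical third solution $(x_3,b_3)$ with strictly smaller support forces $\O_{\overline{\Gamma'}}(x_3-x_1)=0$ for some $\Gamma'$ of size at most $2\overline s$ with $x_3\ne x_1$, contradicting the existence of only one minimum beyond $(x_1,b_1)$; hence $(x_1,b_1)$ and $(x_2,b_2)$ are both minimum-support solutions, violating uniqueness.

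The main obstacle is the last step of the necessity direction: ensuring that the two constructed feasible pairs are both genuinely \emph{minimum}, rather than one being dominated by a third, smaller-support solution. This requires picking the kernel vector $w$ and the partition $\Gamma=\Gamma_1\sqcup\Gamma_2$ so that (i) $\O_i w\ne 0$ on every index of $\Gamma$, equalizing the two attack cardinalities, and (ii) no sparser attack explains $Y$; the second property follows by reapplying the union-of-supports / sparse-observability argument to any candidate third solution. The remainder of the proof consists of routine linear-algebraic bookkeeping that I would defer to the detailed write-up.
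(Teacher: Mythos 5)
A preliminary remark: the paper does not prove Theorem~\ref{th:sparse} at all --- it is imported verbatim from Theorem~III.2 of \cite{YasserETPGarXiv} --- so your attempt has to be judged on its own terms rather than against an in-paper argument. Your sufficiency direction is correct and is the standard one: the union $\Gamma$ of the two supports has cardinality at most $2\overline{s}$, so $\O_{\overline{\Gamma}}$ has trivial kernel (this is exactly the content of Proposition~\ref{cor:injectivity}), forcing the two states to coincide, after which minimality pins down the Boolean parts. You do implicitly use that every minimizer of Problem~\ref{prob:mas} has support at most $\overline{s}$; this follows from feasibility of the true $(x^*,b^*)$ and deserves one line.

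The necessity direction has a genuine gap, in two places. First, the claim that $w\in\ker\O_{\overline{\Gamma}}$ and $\Gamma$ with $\vert\Gamma\vert=2\overline{s}$ can be chosen so that $\O_i w\neq 0$ for every $i\in\Gamma$ is unjustified and false in general: if some $\O_i$ with $i\in\Gamma$ annihilates all of $\ker\O_{\overline{\Gamma}}$, no perturbation of $w$ inside that kernel helps, and shrinking $\Gamma$ to the actual block-support of $\O w$ can leave a set of odd or small cardinality that cannot be split into two parts of equal size; in that case one of your two pairs has strictly smaller support than the other and your non-uniqueness claim evaporates. Second, your argument excluding a sparser third minimizer ``applies the sufficiency argument in reverse,'' i.e.\ it invokes the trivial-kernel property of $\O_{\overline{\Gamma'}}$ --- but that is precisely the hypothesis being negated in this direction, so the step is circular, and nothing prevents a strictly sparser explanation of $Y$ from existing once sparse observability fails. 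The repair is to stop trying to make both constructed pairs minimizers of equal cardinality. Since $Y=E_1=\O w+E_2$ with $E_1,E_2$ both $\overline{s}$-sparse and $w\neq 0$, the same measurement vector is consistent with true state $0$ under attack $E_1$ and with true state $w$ under attack $E_2$; the minimizer of Problem~\ref{prob:mas} is a function of $Y$ alone, so it cannot return the true $\eta^*=(x^*,b^*)$ in both scenarios, and the conclusion of the theorem (a unique solution \emph{equal to} $\eta^*$) fails without any need to control support cardinalities or to rule out third solutions.
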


The following result was established as part of the proof of Theorem \ref{th:sparse} in~\cite{YasserETPGarXiv} and will be used in the Section\ref{sec:smt}.
\begin{proposition}
Let the dynamical system $\Sigma_a$ defined by \eqref{eq:system_attack} be $2\overline{s}$-sparse observable. The observability matrix $\O_{\mathcal{I}}$ has a trivial kernel for any set \mbox{$\mathcal{I} \subseteq\{1,\ldots,p\}$} with \mbox{$\vert {\mathcal{I}}\vert \ge p - 2\overline{s}$}.
\label{cor:injectivity}
\end{proposition}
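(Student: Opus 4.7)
The plan is to reduce the claim to the definition of $2\overline{s}$-sparse observability by a simple set-complement argument, using the fact that the kernel of a block matrix is contained in the kernel of any of its sub-blocks (in the row-wise sense).

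Concretely, I would start from an arbitrary $\mathcal{I}\subseteq\{1,\ldots,p\}$ with $|\mathcal{I}|\ge p-2\overline{s}$, so that its complement satisfies $|\overline{\mathcal{I}}| = p-|\mathcal{I}|\le 2\overline{s}$. The key construction is to extend $\overline{\mathcal{I}}$ to a set $\Gamma\subseteq\{1,\ldots,p\}$ of size exactly $2\overline{s}$ by adjoining any $2\overline{s}-|\overline{\mathcal{I}}|$ indices drawn from $\mathcal{I}$. By construction $\overline{\mathcal{I}}\subseteq\Gamma$, equivalently $\overline{\Gamma}\subseteq\mathcal{I}$, and $|\Gamma|=2\overline{s}$.

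The $2\overline{s}$-sparse observability hypothesis, applied to this particular $\Gamma$, states that $\Sigma_{\overline{\Gamma}}$ is observable, which is exactly the statement that the stacked observability matrix $\O_{\overline{\Gamma}}$ has trivial kernel (here I would invoke the paper's standing assumption that $\tau$ is chosen large enough to expose observability in $\O$, so that the same $\tau$ works for every $(n-2\overline{s})$-dimensional observable subsystem as well). Now, because $\overline{\Gamma}\subseteq\mathcal{I}$, the matrix $\O_{\overline{\Gamma}}$ is obtained from $\O_{\mathcal{I}}$ by keeping a subset of its block rows. Hence for any $v\in\mathrm{ker}(\O_{\mathcal{I}})$ one has $\O_{\mathcal{I}}\,v=0\Rightarrow \O_{\overline{\Gamma}}\,v=0\Rightarrow v=0$, which is the desired conclusion.

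There is no real obstacle in this proof: the content is purely combinatorial bookkeeping on index sets. The only point worth double-checking is the mild detail that the paper's observability matrix uses $\tau$ (rather than $n$) time steps, so I would note explicitly that $\tau$ has been chosen so that observability of each of the $\binom{p}{2\overline{s}}$ subsystems $\Sigma_{\overline{\Gamma}}$ is witnessed at this horizon; this is implicit in the paper's setup but deserves one line of acknowledgement before closing the argument.
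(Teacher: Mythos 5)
Your proof is correct. The paper itself does not prove this proposition---it is imported from the proof of Theorem III.2 in the cited reference \cite{YasserETPGarXiv}---but your argument (complement $\mathcal{I}$, pad $\overline{\mathcal{I}}$ to a set $\Gamma$ of size exactly $2\overline{s}$ so that $\overline{\Gamma}\subseteq\mathcal{I}$, invoke observability of $\Sigma_{\overline{\Gamma}}$, and use that $\ker\O_{\mathcal{I}}\subseteq\ker\O_{\overline{\Gamma}}$ because $\O_{\overline{\Gamma}}$ is a block-row submatrix of $\O_{\mathcal{I}}$) is exactly the standard argument the statement is meant to encapsulate. Your caveat about $\tau$ is well placed: the paper only states that $\tau$ is chosen so that $\O$ has full rank, whereas the proposition needs the horizon $\tau$ to witness observability of every subsystem $\Sigma_{\overline{\Gamma}}$ (e.g.\ $\tau=n$ suffices by Cayley--Hamilton), so making that assumption explicit is the right thing to do.
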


\begin{remark}
As stated in Theorem~\ref{th:sparse}, the state of $\Sigma_a$ can be uniquely determined when the system
is $2\overline{s}$-sparse observable. This condition seems expensive to check because of its combinatorial nature: we have to check observability of all possible systems $\Sigma_{\overline{\Gamma}}$. Yet,  the $2\overline{s}$-sparse observability condition clearly illustrates a fundamental limitation for secure state estimation: \emph{it is impossible to correctly reconstruct the state whenever a number of sensors larger than or equal to $\lceil p/2 \rceil$ is attacked, since multiple states can be mapped to the same measurements}. 

Indeed, suppose that we have
an even number of sensors $p$ and $\overline{s} = p/2$ sensors are attacked. Then, Theorem~\ref{th:sparse}  requires  the system to still be observable after removing
$2\overline{s} = p$ rows from the map $C$. However, this is impossible since $C_{\overline{\Gamma}}$ becomes the 
transformation mapping every state to zero.  
This fundamental
limitation is consistent with previous results reported in the literature~\cite{Hamza_TAC,HamzaAllerton,strongObservability}.
\end{remark}

Problem \ref{prob:mas} can be solved by transforming it into a Mixed Integer-Quadratic Program (MIQP) as follows:
\begin{align} \label{eq:miqp}
{\min_{(x,b) \in \R^n \times \B^p}} & \;\sum_{i=1}^{p} b_{i} \qquad
\text{s.t.} 
&\; \|{Y_{i} - \O_i x}\|_{2}\leq Mb_{i}+\norm{\Psi_{i}}& 1\leq i\leq p, 
\end{align}
where $M \in \R$ is a constant that should be ``big'' enough to make each constraint not active when $b_i=1$. The relaxation in~\eqref{eq:miqp} is typically used to express constraints including logical implications~\cite{l2008operations}; however, in this case, the choice of $M$ affects the completeness of the approach, which will depend on $M$. For example, since $\|{Y_{i} - \O_i x}\|_{2}$ is ultimately bounded by the power of the attack $\norm{E_i}$, a value of $M < \norm{E_i} = \norm{Y_{i} - \O_i x}$, in the absence of noise, can produce an incorrect result. While a physical sensor has a bounded dynamic range in practice, such a bound is not known \emph{a priori} in our formulation, which makes no assumptions on $\norm{E_i}$. Therefore, completeness of the MIQP formulation~\eqref{eq:miqp} cannot be guaranteed in general. 

In the sequel, we detail an algorithm which exploits the geometry of the state estimation problem and the convexity of the quadratic constraints to generate a provably correct solution using the SMT paradigm. We compare the SMT-based solution with the MIQP formulation in~\eqref{eq:miqp} using a commercial MIQP solver.
%Then, we use~\eqref{eq:miqp}, as solved by a commercial MIQP solver, as a term of comparison for our approach.     

% %%%%%%%%%%%%%%%%%%%%%%%%%%%%%%%%%%%%%%%%%%%%%%%%%%%%%%%%%%%%%%%%%%%%%%%%%%%%%%%
\section{SMT-Based Detector}
\label{sec:smt}

To decide whether a combination of Boolean and convex constraints is satisfiable, we construct the detection algorithm \textsc{Imhotep-SMT} using the \emph{lazy} SMT paradigm~\cite{smtbook}. As in the CalCS solver~\cite{CalCS}, our 
decision procedure combines a SAT solver (\textsc{SAT-Solve}) and a theory solver ($\mathcal{T}$-\textsc{Solve}) for convex constraints on real numbers. The SAT solver efficiently reasons about combinations of Boolean and pseudo-Boolean constraints, using the 
David-Putnam-Logemann-Loveland (DPLL) algorithm~\cite{lazySMT}, to suggest possible assignments for the convex constraints. The theory solver checks the consistency of the given assignments, and provides the reason for the conflict, a \emph{certificate}, or a counterexample, whenever inconsistencies are found. Each certificate results in learning new constraints which will be used by the SAT solver to prune the search space. The complex detection and mitigation decision task is thus broken into two simpler tasks, respectively, over the Boolean and convex domains. We denote the approach as lazy, because it checks and learns about consistency of convex constraints only when necessary, as detailed below.

\subsection{Overall Architecture}

As illustrated in Algorithm~\ref{alg:smt},  we start by mapping each convex constraint to an auxiliary Boolean variable $c_i$ to obtain the following (pseudo-)Boolean satisfiability problem:
\begin{align*}
\phi_B ::= \left( \bigwedge_{i \in \{ 1,\hdots,p \}}  \lnot b_i  \Rightarrow c_i \right) 
\land \left(\sum_{i \in \{ 1,\hdots,p \}} b_i \le \overline{s} \right)
\end{align*}
where $c_i = 1$ if $\norm{Y_i - \O_{i} {x} } \le \norm{\Psi_i}$ is satisfied, and zero otherwise. By only relying on the Boolean structure of 
the problem, \textsc{SAT-Solve} returns an assignment for the variables $b_i$ and $c_i$ (for $i = 1, \hdots, p$), thus hypothesizing which sensors are attack-free, hence which convex constraints should be jointly satisfied.  

This Boolean assignment is then used by $\mathcal{T}$-\textsc{Solve} to determine whether there exists a state $x \in \R^n$ which satisfies all the convex constraints related to the unattacked sensors, i.e.~$\norm{Y_i - \O_{i} {x} } \le \norm{\Psi_i}$ for $i \in \overline{\supp}(b)$. If $x$ is found, \textsc{Imhotep-SMT} terminates with $\verb+SAT+$ and provides the solution $(x,b)$. Otherwise, the $\verb+UNSAT+$ certificate $\phi_{\text{cert}}$ is generated in terms of new Boolean constraints, explaining which sensor measurements are conflicting and may be under attack. The most naive certificate can take the form of:
\begin{align*}
 \phi_{\text{UNSAT-cert}} = \sum_{i \in \overline{\supp}(b)} b_i \ge 1,
\end{align*}
which encodes the fact that at least one of the sensors in the set $\overline{\supp}(b)$ (i.e. for which $b_i=0$) is actually under attack.
This augmented Boolean problem is then fed back to \textsc{SAT-Solve} to produce a new assignment. The sequence of new SAT queries is then repeated until $\mathcal{T}$-\textsc{Solve} terminates with $\verb+SAT+$.

By the $2\overline{s}$-sparse observability condition (Theorem \ref{th:sparse}), there always exists a unique solution to Problem~\ref{prob:mas}, hence Algorithm~\ref{alg:smt} will always terminate. However, to help the SAT solver quickly converge towards the correct assignment, a central problem in lazy SMT solving is to generate succinct explanations whenever conjunctions of convex constraints are unfeasible, possibly highlighting the minimum set of conflicting assignments.  
The rest of this section will then focus on the implementation of the two main tasks of $\mathcal{T}$-\textsc{Solve}, namely, (i) checking the satisfiability of a given assignment ($\mathcal{T}$-\textsc{Solve.Check}), and (ii) generating succinct UNSAT certificates ($\mathcal{T}$-\textsc{Solve.Certificate}). For clarity's sake, we focus on the noiseless case ($\Psi = 0$) in this section; we will extend our results to the noisy case in Section~\ref{sec:comp}.

\begin{algorithm}[t]
\caption{\textsc{Imhotep-SMT}}
\begin{algorithmic}[1]
\STATE status := \verb+UNSAT+;
\STATE $\phi_B := \left( \bigwedge_{i \in \{ 1,\hdots,p \}}  \lnot b_i  \Rightarrow c_i \right) \land \left( \sum_{i \in \{ 1,\hdots,p \}} b_i \le \overline{s} \right)
$;
\WHILE{status == \texttt{UNSAT}}
	\STATE $(b,c) :=$ \textsc{SAT-Solve}$( \phi_B )$;
	\STATE (status, $x$) := $\mathcal{T}$-\textsc{Solve.Check}$(\overline{\supp}(b))$;
	\IF {status == \texttt{UNSAT}}
		\STATE $\phi_{\text{cert}}$ := $\mathcal{T}$-\textsc{Solve.Certificate}$(b, x)$;
		\STATE $\phi_B := \phi_B \land \phi_{\text{cert}}$;
	\ENDIF
\ENDWHILE
\STATE \textbf{return} $\eta = (x,b)$;
\end{algorithmic}
\label{alg:smt}
\end{algorithm}

\subsection{Satisfiability Checking}
Given an assignment of the Boolean variables $b$, with $\vert\supp(b)\vert \le \overline{s}$, the following condition holds:
\begin{equation}\label{eq:lmq}
\min_{x \in \R^n} \norm{Y_{\overline{\supp}(b)} - \O_{\overline{\supp}(b)}x}^2 \le 0
\end{equation}
if and only if $(x,b)$ is the solution of Problem~\ref{prob:mas}. This is a direct consequence of the $2\overline{s}$-sparse observability property discussed in Section~\ref{sec:problem}. 
The preceding \emph{unconstrained least-squares optimization} problem can be solved very efficiently, thus leading to Algorithm~\ref{alg:check}. In practical implementations, \eqref{eq:lmq} should actually be replaced with:
$$ \min_{x \in \R^n} \norm{Y_{\overline{\supp}(b)} - \O_{\overline{\supp}(b)}x}^2 \le \epsilon,$$
where $\epsilon > 0$ is the solver tolerance, accounting for numerical errors. As for noise, for the sake of clarity, we focus here on the case when $\epsilon$ is zero and defer the discussion for non-zero tolerance to the next section.

Since Algorithm~\ref{alg:check} is the basic block of our SMT-based detector, it is important to characterize its soundness and completeness, as is done in the following result.

\begin{lemma}
Let the linear dynamical system $\Sigma_a$ defined in~\eqref{eq:system_attack} be $2\overline{s}$-sparse observable. Let $\norm{\Psi_i} = 0$ for all $i \in \{1,\hdots,p\}$ and let also $\epsilon = 0$ be the numerical solver tolerance for Algorithm \ref{alg:check}. Then for any index set $\mathcal{I}$ with cardinality \mbox{$\vert \mathcal{I} \vert \ge p - \overline{s}$}, Algorithm \ref{alg:check} returns \texttt{SAT} if and only if the following holds:
\begin{enumerate}
\item  $\mathcal{I} \subseteq \overline{\supp}(b^*)$,
\item  $\norm{x^* - x}^2 = 0$,
\end{enumerate}
where $(x^*, b^*)$ is the solution to Problem \ref{prob:mas} and $x$ is computed on line 1 of Algorithm \ref{alg:check}.
\label{prop:check}
\end{lemma}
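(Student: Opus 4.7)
The plan is to exploit the fact that Algorithm \ref{alg:check} returns \texttt{SAT} precisely when the unconstrained least-squares residual $\min_{x \in \R^n} \|Y_\mathcal{I} - \O_\mathcal{I} x\|^2$ equals zero (with $\epsilon = 0$). In the noiseless setting, \eqref{eq:y_equality} gives $Y_i = \O_i x^* + E_i$, and since $(x^*,b^*)$ is the \emph{minimal} support solution of Problem \ref{prob:mas}, we have $\supp(b^*) = \supp(E)$ and $|\supp(b^*)| \le \overline{s}$. The entire argument will be driven by Proposition \ref{cor:injectivity} (trivial kernel of $\O_\mathcal{J}$ whenever $|\mathcal{J}| \ge p - 2\overline{s}$), applied to a carefully chosen subset of $\mathcal{I}$.

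For the forward direction, I assume Algorithm \ref{alg:check} returns \texttt{SAT} and let $x$ be the minimizer computed on line 1, so $\O_\mathcal{I} x = Y_\mathcal{I}$. Subtracting $Y_\mathcal{I} = \O_\mathcal{I} x^* + E_\mathcal{I}$ yields
\begin{equation*}
\O_\mathcal{I}(x - x^*) = E_\mathcal{I}.
\end{equation*}
The key step is to restrict attention to $\mathcal{J} := \mathcal{I} \cap \overline{\supp(b^*)}$, i.e.\ the indices in $\mathcal{I}$ that are actually attack-free. Then $E_\mathcal{J} = 0$ and
\begin{equation*}
|\mathcal{J}| \ge |\mathcal{I}| - |\supp(b^*)| \ge (p - \overline{s}) - \overline{s} = p - 2\overline{s},
\end{equation*}
so Proposition \ref{cor:injectivity} gives $\ker \O_\mathcal{J} = \{0\}$. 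From $\O_\mathcal{J}(x - x^*) = 0$ I conclude $x = x^*$, which establishes condition (2). Plugging $x = x^*$ back into $\O_\mathcal{I}(x - x^*) = E_\mathcal{I}$ gives $E_\mathcal{I} = 0$; combined with $\supp(b^*) = \supp(E)$ (from minimality of $b^*$ in the noiseless case) this yields $\mathcal{I} \cap \supp(b^*) = \emptyset$, i.e.\ condition (1).

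The reverse direction is the easy one: if $\mathcal{I} \subseteq \overline{\supp(b^*)}$, then $E_\mathcal{I} = 0$ and $Y_\mathcal{I} = \O_\mathcal{I} x^*$, so $x = x^*$ achieves zero residual and Algorithm \ref{alg:check} returns \texttt{SAT}. (Condition (2) for the returned $x$ then follows exactly as in the forward direction, since $|\mathcal{I}| \ge p - \overline{s} \ge p - 2\overline{s}$ already ensures $\O_\mathcal{I}$ has trivial kernel and the least-squares minimizer is unique.)

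The main subtlety, and where I would spend the most care, is the bookkeeping tying $b^*$ to $E$: one must invoke the minimality clause of Problem \ref{prob:mas} to argue that in the noiseless case every index marked by $b^*_i = 1$ actually carries a nonzero attack $E_i \ne 0$, since otherwise $b^*_i$ could be flipped to $0$ while preserving feasibility, contradicting minimality. Without this identification $\supp(b^*) = \supp(E)$, the step that passes from $E_\mathcal{I} = 0$ to $\mathcal{I} \subseteq \overline{\supp(b^*)}$ would fail. Everything else is a clean application of the sparse-observability machinery already in place.
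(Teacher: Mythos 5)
Your proof is correct and follows essentially the same route as the paper's: both split $\mathcal{I}$ into its attack-free part (your $\mathcal{J}$, the paper's $\mathcal{I}'$) and the remainder, invoke Proposition~\ref{cor:injectivity} on the attack-free part (of cardinality at least $p-2\overline{s}$) to force $x = x^*$, and then conclude that the residual attacks on the remaining indices must vanish. Your explicit observation that minimality of $b^*$ is needed to identify $\supp(b^*)$ with $\supp(E)$ is a point the paper leaves implicit, but it does not change the substance of the argument.
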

\begin{proof}
Since the ``if'' condition is trivial to show, we focus on the ``only if'' condition as follows. Define $\mathcal{I}'$ as the set of indices of the sensors that are attack free. Define also $\mathcal{I}''$ as the set $\mathcal{I}'' = \mathcal{I} \setminus \mathcal{I}'$.  We can write the result from lines 1 and 2 of Algorithm~\ref{alg:check} as:
\begin{align*}
\min_{{x} \in \R^n} &\norm{Y_{\mathcal{I}} - \O_{\mathcal{I}} {x} }^2 = 0 \\
 	&\Rightarrow \min_{x \in \R^n} \sum_{i \in {\mathcal{I}}} \norm{Y_i - \O_i x }^2 = 0\\
	&\Rightarrow  \min_{x \in \R^n} \sum_{i \in {\mathcal{I'}}} \norm{Y_i - \O_i x }^2 + \sum_{i \in {\mathcal{I''}}} \norm{Y_i - \O_i x }^2 = 0\\
	&\Rightarrow \min_{x \in \R^n} \norm{\O_{\mathcal{I'}} (x^* - {x}) }^2 + \sum_{i \in {\mathcal{I''}}} \norm{\O_i (x^* - x) + E^*_i }^2 = 0
\end{align*}
Hence, in order for Algorithm~\ref{alg:check} to return \texttt{SAT}, both terms $\norm{\O_{\mathcal{I'}} (x^* - {x}) }^2$ and $\sum_{i \in {\mathcal{I''}}} \norm{\O_i (x^* - x) + E^*_i }^2$ must vanish at the optimal point.

Since at most $\overline{s}$ sensors are under attack, we conclude that  $\vert \mathcal{I}'' \vert$ is at most $\overline{s}$ and $\vert \mathcal{I}'  \vert \ge p - 2\overline{s}$. Hence, it follows from  Proposition~\ref{cor:injectivity} that the observability matrix $\O_{\mathcal{I'}}$ has a trivial kernel. Therefore, we conclude that $\norm{\O_{\mathcal{I'}} (x^* - {x}) }^2$ evaluates to zero if and only if $x = x^*$. This, in turn, implies that the solution of the optimization problem in line 1 of Algorithm~\ref{alg:check} is $x^*$ and hence $\norm{x^* - x}^2 = 0$.

To conclude, we need to show that $\mathcal{I} \subseteq \overline{\supp}(b^*)$. However, this follows from the requirement that $\sum_{i \in {\mathcal{I''}}} \norm{\O_i (x^* - x) + E^*_i }^2$ vanishes at the optimal point, i.e.,~for $x  = x^*$. Hence:
\begin{align*}
\sum_{i \in {\mathcal{I''}}} \norm{\O_i (x^* - x) + E^*_i }^2 &= 0 
 \Rightarrow \sum_{i \in {\mathcal{I''}}} \norm{E^*_i }^2 = 0 
\end{align*}
which, in turn, implies that all the sensors indexed by $\mathcal{I''}$ are attack free. Combining this result with the definition of the set $\mathcal{I'}$ we conclude that all the sensors indexed by $\mathcal{I}$ are actually attack free, and the inclusion $\mathcal{I} \subseteq \overline{\supp}(b^*)$ holds.
\end{proof}

When noise and/or non-zero numerical tolerance is present, we modify Algorithm~\ref{alg:check} by checking instead whether the optimal $x$ drives the objective function below the noise level and the numerical tolerance. Clearly, satisfying such a constraint on the $2$-norms is not sufficient to retrieve the actual state in the sense of Definition~\ref{def:deltacom}: attacks having a relatively small power may not be detected. Therefore, in Section~\ref{sec:comp}, we will determine which conditions to require on the noise level and the numerical tolerance to achieve $\delta$-completeness as in Definition~\ref{def:deltacom}.

\begin{algorithm}[t]
\caption{$\mathcal{T}$\textsc{-Solve.Check}$(\mathcal{I})$}
\begin{algorithmic}[1]
\STATE \textbf{Solve:} $x:= \argmin_{{x} \in \R^n} \norm{Y_{\mathcal{I}} - \O_{\mathcal{I}} {x} }^2$
\IF{ $\norm{Y_{\mathcal{I}} - \O_{\mathcal{I}} x}^2 = 0$} \label{line:check}
	\STATE status = \texttt{SAT};
\ELSE
	\STATE status = \texttt{UNSAT};
\ENDIF
\STATE \textbf{return} (status, $x$);
\end{algorithmic}
\label{alg:check}
\end{algorithm}

%%====================================================================
\subsection{Generating UNSAT certificates}

Whenever $\mathcal{T}$\textsc{-Solve.Check} provides \texttt{UNSAT}, a certificate could be easily generated as follows:
\begin{align}
 \phi_{\text{triv-cert}} = \sum_{i \in \overline{\supp}(b)} b_i \ge 1,
\label{eq:trivcert} 
\end{align}
indicating that at least one of the sensors, which was initially assumed as attack-free (i.e. for which $b_i=0$), is actually under attack; one of the $b_i$ variables should then be set to one in the next assignment of the SAT solver.  However, such \emph{trivial certificate} $\phi_{\text{triv-cert}}$ does not provide much information, since it only excludes the current assignment from the search space, and can lead to exponential execution time, as reflected by the following proposition.

\begin{proposition}
Let the linear dynamical system $\Sigma_a$ defined in~\eqref{eq:system_attack} be $2\overline{s}$-sparse observable. Let $\norm{\Psi_i} = 0$ for all $i \in \{1,\hdots,p\}$ and let also 
$\epsilon = 0$ be the numerical solver tolerance for Algorithm \ref{alg:check}. Then, Algorithm~\ref{alg:smt} which uses the trivial UNSAT certificate $\phi_{\text{triv-cert}}$ in~\eqref{eq:trivcert} is $\delta$-complete (in the sense of Definition \ref{def:deltacom}) with $\delta = 0$.
Moreover, the upper bound on the number of iterations of Algorithm~\ref{alg:smt} is $ \sum_{s = 0}^{\overline{s}} \binom{p}{s}.$
\label{prop:trivial-cert}
\end{proposition}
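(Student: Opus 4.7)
The plan is to establish both parts -- $\delta$-completeness with $\delta=0$ and the iteration bound -- using Lemma~\ref{prop:check} as the main tool, together with a counting argument for termination.

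First, for soundness, I would observe that whenever Algorithm~\ref{alg:smt} returns with status \texttt{SAT}, the candidate $b$ produced by \textsc{SAT-Solve} necessarily respects the cardinality clause in $\phi_B$, so $|\supp(b)| \le \overline{s}$ and consequently $|\overline{\supp}(b)| \ge p - \overline{s}$. This meets the hypothesis of Lemma~\ref{prop:check} with $\mathcal{I} = \overline{\supp}(b)$. Applying the lemma yields both $\overline{\supp}(b) \subseteq \overline{\supp}(b^*)$ (equivalently, $\supp(b^*) \subseteq \supp(b)$) and $\|x^* - x\|^2 = 0$, which are precisely the two conditions of Definition~\ref{def:deltacom} with $\delta=0$.

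Second, for termination and the iteration count, the key step is to argue that each \texttt{UNSAT} iteration strictly shrinks the Boolean search space while never pruning the true $b^*$. Concretely, suppose an assignment $b$ with $|\supp(b)| \le \overline{s}$ causes $\mathcal{T}$-\textsc{Solve.Check} to return \texttt{UNSAT}. By the contrapositive of Lemma~\ref{prop:check}, $\overline{\supp}(b) \not\subseteq \overline{\supp}(b^*)$, so there exists $i \in \overline{\supp}(b)$ with $b^*_i = 1$; hence $b^*$ satisfies the learned constraint $\sum_{i \in \overline{\supp}(b)} b_i \ge 1$. At the same time, this constraint is violated by the failing assignment $b$ itself (and by every $b'$ with $\supp(b') \subseteq \supp(b)$), so \textsc{SAT-Solve} cannot return $b$ again.

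Third, a simple counting argument closes the bound. The set of Boolean assignments compatible with $\sum_{i=1}^{p} b_i \le \overline{s}$ has cardinality $\sum_{s=0}^{\overline{s}} \binom{p}{s}$. By Theorem~\ref{th:sparse}, a solution $b^*$ with $|\supp(b^*)| \le \overline{s}$ exists, and by the previous paragraph it is never excluded by any certificate. Since each iteration removes at least the currently failing candidate from \textsc{SAT-Solve}'s feasible set, after at most $\sum_{s=0}^{\overline{s}} \binom{p}{s}$ iterations the solver is forced to propose an assignment $b$ with $\supp(b) \supseteq \supp(b^*)$; by Lemma~\ref{prop:check}, such a $b$ triggers \texttt{SAT}, and Algorithm~\ref{alg:smt} terminates with a $0$-SAT output.

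The main subtlety lives in the second step, namely, checking that the trivial certificate is informative enough to simultaneously prune the failing assignment and preserve $b^*$. In the noiseless, zero-tolerance regime this is an immediate consequence of the biconditional in Lemma~\ref{prop:check}; without that biconditional one could in principle have UNSAT outcomes for which $\overline{\supp}(b) \subseteq \overline{\supp}(b^*)$, which would risk ruling $b^*$ out. The remaining arguments are then straightforward bookkeeping over a finite Boolean search space.
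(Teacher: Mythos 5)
Your proof is correct and takes essentially the same route as the paper's: $\delta$-completeness is obtained directly from Lemma~\ref{prop:check}, and the iteration bound comes from exhausting the $\sum_{s=0}^{\overline{s}}\binom{p}{s}$ Boolean assignments compatible with the cardinality clause. You merely make explicit the step the paper leaves implicit --- that in the noiseless, zero-tolerance case the trivial certificate always prunes the failing assignment while never excluding $b^*$ --- which is a worthwhile but not substantively different elaboration.
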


\begin{proof}
$\delta$-Completeness of Algorithm~\ref{alg:smt} follows directly from Lemma \ref{prop:check}. To derive the bound on the number of iterations, we first recall that the $2\overline{s}$-sparse observability condition ensures uniqueness of a minimal solution (Theorem~\ref{th:sparse}). The worst case scenario would happen when the solver exhaustively explores all possible combinations of attacked sensors with cardinality less than or equal to $\overline{s}$ in order to find the correct assignment. This is equal to $ \sum_{s = 0}^{\overline{s}} \binom{p}{s}$ iterations.
\end{proof}

\subsection{Enhancing the Execution Time} \label{sec:heur}

The generated UNSAT certificates heavily affects the overall execution time of Algorithm~\ref{alg:smt}: the  smaller the certificate, the more information is learnt and the faster is the convergence of the SAT solver to the correct assignment. For example,  a certificate with $b_i = 1$ would identify exactly one attacked sensor at each step, a substantial improvement with respect to the exponential worst-case complexity of the plain SAT problem, which is NP-complete. Hence, and %On the other hand, to generate $\phi_{\text{conf-cert}}$, we only pay the cost of a linear search over $\mathcal{I}$ and, for each step, a least-square optimization problem, which amounts to an overall complexity that is polynomial. 
inspired by the theoretical underpinnings of \textsc{CalCS}~\cite{CalCS}, we focus on designing heuristics that can lead to more \emph{compact certificates} to enhance the execution time of \textsc{Imhotep-SMT}. To do so, we exploit the specific structure of the secure state estimation problem and generate customized, yet stronger, UNSAT certificates. In this subsection, we focus on generating two types of certificates called (i) conflicting certificates and (ii) agreeable certificates.

First, we observe that the measurements of each sensor \mbox{$Y_i = \O_i x$} define a hyperplane $\mathbb{H}_i \subseteq \R^n$ as:
$$ \mathbb{H}_i  = \{ x \in \R^n \; \vert  \; Y_i - \O_i x = 0\}.$$
The dimension of $\mathbb{H}_i$ is given by the dimension of the null space of the matrix $\O_i$, i.e.,~$\text{dim}(\mathbb{H}_i) = \text{dim}(\ker{\O_i})$.
Then, satisfiability checking in Algorithm~\ref{alg:check} can be reformulated as follows. Let $r_i$ be the \emph{residual} of the state $x$ with respect to the hyperplane $\mathbb{H}_i$, defined as $r_i(x) = \norm{Y_i - \O_i x}^2$. The optimization problem in Algorithm~\ref{alg:check} is equivalent to searching for a point $x$ that minimizes the individual residuals with respect to all the hyperplanes $\mathbb{H}_i$ for $i \in {\mathcal{I}}$, i.e.: $$\min_{x \in \R^n} \norm{Y_{\mathcal{I}} - \O_{\mathcal{I}} x }^2 =  \min_{x \in \R^n} \sum_{i \in {\mathcal{I}}} \norm{Y_i - \O_i x }^2 = \min_{x \in \R^n} \sum_{i \in {\mathcal{I}}} r_i(x).$$

Based on the formulation above, it is straightforward to state and show the following result.
\begin{proposition}
Let the linear dynamical system $\Sigma_a$ defined in~\eqref{eq:system_attack} be $2\overline{s}$-sparse observable.  Let $\norm{\Psi_i} = 0$ for all $i \in \{1,\hdots,p\}$ and let also 
$\epsilon = 0$ be the numerical solver tolerance for Algorithm \ref{alg:check}. Then for any set of indices $\mathcal{I} \subseteq \{1, \hdots, p\}$,
the following statements are equivalent:
\begin{itemize}
\item $\mathcal{T}$\textsc{-Solve.Check}$(\mathcal{I})$ returns \texttt{UNSAT},
\item $\min_{x \in \R^n} \sum_{i \in {\mathcal{I}}} r_i(x) > 0$,
\item $\bigcap_{i \in \mathcal{I}} \mathbb{H}_i = \emptyset$.
\end{itemize}
\label{prop:geom}
\end{proposition}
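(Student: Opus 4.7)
The plan is to prove the three-way equivalence by establishing $(1) \Leftrightarrow (2)$ and then $(2) \Leftrightarrow (3)$, so that all three statements are tied together through the common quantity $\sum_{i\in\mathcal{I}} r_i(x)$.

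The equivalence $(1) \Leftrightarrow (2)$ is essentially unpacking Algorithm~\ref{alg:check}. On line 1 the procedure computes $x := \argmin_{x\in\R^n} \norm{Y_{\mathcal{I}} - \O_{\mathcal{I}} x}^2$, and on line \ref{line:check} it declares \texttt{UNSAT} precisely when $\norm{Y_{\mathcal{I}} - \O_{\mathcal{I}} x}^2 \neq 0$ (with $\epsilon = 0$ as assumed). Because this quantity is a squared norm, non-zero is the same as strictly positive. Combining this with the identity $\norm{Y_{\mathcal{I}} - \O_{\mathcal{I}} x}^2 = \sum_{i\in\mathcal{I}} \norm{Y_i - \O_i x}^2 = \sum_{i\in\mathcal{I}} r_i(x)$ noted in the paragraph preceding the proposition, the minimum of the least-squares objective coincides with $\min_x \sum_{i\in\mathcal{I}} r_i(x)$, yielding the equivalence immediately.

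For $(2) \Leftrightarrow (3)$, I will use contrapositive in both directions. In the easy direction $\neg(3) \Rightarrow \neg(2)$: if there exists $x^\star \in \bigcap_{i\in\mathcal{I}} \mathbb{H}_i$, then $Y_i - \O_i x^\star = 0$ for every $i\in\mathcal{I}$, hence $r_i(x^\star) = 0$ for every $i$, so $\sum_{i\in\mathcal{I}} r_i(x^\star) = 0$ and the minimum cannot be strictly positive. In the other direction $\neg(2) \Rightarrow \neg(3)$: suppose $\min_x \sum_{i\in\mathcal{I}} r_i(x) \le 0$. Since each $r_i$ is a squared norm and therefore non-negative, the minimum is in fact $0$. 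The objective $\sum_{i\in\mathcal{I}} r_i(x) = \norm{Y_{\mathcal{I}} - \O_{\mathcal{I}} x}^2$ is a convex quadratic in $x$, so its minimum is attained at some $x^\star\in\R^n$ (the normal equations $\O_{\mathcal{I}}^\top \O_{\mathcal{I}} x = \O_{\mathcal{I}}^\top Y_{\mathcal{I}}$ are always consistent). At such a minimizer, $\sum_{i\in\mathcal{I}} r_i(x^\star) = 0$ forces each individual $r_i(x^\star) = 0$, so $x^\star \in \mathbb{H}_i$ for every $i\in\mathcal{I}$, whence $\bigcap_{i\in\mathcal{I}} \mathbb{H}_i \neq \emptyset$.

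The only subtle point, and essentially the main obstacle, is justifying that the infimum of $\sum_{i\in\mathcal{I}} r_i(x)$ is actually attained rather than merely approached in the limit; without this, a zero infimum might not correspond to any concrete $x^\star$ lying in every hyperplane. This subtlety is dispatched by invoking the standard least-squares fact that a convex quadratic of the form $\norm{b - Ax\|^2$ always admits a minimizer, regardless of whether $A$ has full column rank. Note that I do not need to use the $2\overline{s}$-sparse observability hypothesis here; it is inherited from the context in which the proposition is applied but not required for this particular equivalence.
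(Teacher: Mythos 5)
Your proof is correct. The paper offers no proof of this proposition at all (it is declared ``straightforward to state and show''), and your argument is exactly the natural one that fills that gap: the $(1)\Leftrightarrow(2)$ step is just unpacking line~\ref{line:check} of Algorithm~\ref{alg:check} together with the identity $\norm{Y_{\mathcal{I}}-\O_{\mathcal{I}}x}^2=\sum_{i\in\mathcal{I}}r_i(x)$, and you correctly isolate and dispatch the only genuinely non-trivial point, namely that the least-squares infimum is attained (via consistency of the normal equations), which is what licenses the conclusion that a zero minimum yields an actual common point of the $\mathbb{H}_i$. Your observation that $2\overline{s}$-sparse observability is not needed here is also accurate; the hypothesis is carried along from the surrounding context but plays no role in this equivalence.
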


\begin{filecontents}{data.dat}
1.88 6.5
\end{filecontents}

\begin{filecontents}{data2.dat}
2 6
\end{filecontents}

\begin{filecontents}{data3.dat}
1.9 6.8
\end{filecontents}

\begin{figure*}[t]
\centering
{
\begin{tabular}{c|c}
	\subfigure[ Four hyperplanes corresponding to measurements from 4 different sensors. The red hyperplane corresponds to the sensor under attack. All other hyperplanes intersect at the unique  solution. The optimal point is marked as a black box.
	]{\label{fig:heur1} \resizebox{0.29\textwidth}{!}{
	\begin{tikzpicture}
		\begin{axis}[xlabel=$x_1$,ylabel={$x_2$}, xmin=0, xmax=5,ymax = 10]
			% use TeX as calculator:
			\addplot [mark=*,color=blue]{-x + 8};
			\addplot [mark=*,color=brown]{x + 4};
			\addplot [mark=square*,color=red]{6 + 2};			
			\addplot [mark=*,color=gray]{2*x + 2};
			\addplot  [mark=square*, style={solid, fill=black}] table {data.dat};
			%\legend{$Y_1 - \O_1 x = 0$,$Y_2 - \O_2 x = 0$,$Y_3 - \O_3 x = 0$,$Y_4 - \O_4 x= 0$}	
		\end{axis}
	\end{tikzpicture}}
	}&
	%-------------
	\subfigure[An example of a run of Algorithm \ref{alg:certificate1}. In the first iteration (left), the set $\mathcal{I}\_min\_r$ contains the $p - 2\overline{s} = 4 - 2\times1 = 2$ indexes of the sensors that correspond to the minimal residuals. This set is a non-conflicting set and hence the corresponding hyperplanes have a unique intersection point. In the second iteration (right), the index of the sensor corresponding to the maximum residual is added to the set $\mathcal{I}_{temp}$ resulting into a conflict. Algorithm \ref{alg:certificate1} terminates and returns the conflicting set  $\mathcal{I}_{temp}$ (In both cases, the optimal point is marked as a black box).
	]{\label{fig:timelapse1} \resizebox{0.58\textwidth}{!}{
	\begin{tikzpicture}	
		\begin{axis}[xlabel=$x_1$,ylabel={$x_2$}, xmin=0, xmax=5,ymax = 10]
			% use TeX as calculator:
			\addplot [mark=*,color=blue]{-x + 8};
			%\addplot [mark=*,color=brown]{x + 4};
			%\addplot [mark=square*,color=red]{6 + 2};			
			\addplot [mark=*,color=gray]{2*x + 2};
			\addplot  [mark=square*, style={solid, fill=black}] table {data2.dat};
			%\legend{$Y_1 - \O_1 x = 0$,$Y_2 - \O_2 x = 0$,$Y_3 - \O_3 x = 0$,$Y_4 - \O_4 x= 0$}	
		\end{axis}
	\end{tikzpicture}
	\begin{tikzpicture}	
		\begin{axis}[xlabel=$x_1$,ylabel={$x_2$}, xmin=0, xmax=5,ymax = 10]
			% use TeX as calculator:
			\addplot [mark=*,color=blue]{-x + 8};
			%\addplot [mark=*,color=brown]{x + 4};
			\addplot [mark=square*,color=red]{6 + 2};			
			\addplot [mark=*,color=gray]{2*x + 2};
			\addplot  [mark=square*, style={solid, fill=black}] table {data3.dat};			
			%\legend{$Y_1 - \O_1 x = 0$,$Y_2 - \O_2 x = 0$,$Y_3 - \O_3 x = 0$,$Y_4 - \O_4 x= 0$}	
		\end{axis}
	\end{tikzpicture}
	}
	}%&
%%====== End of Figures
\end{tabular}
}
\caption{Pictorial examples illustrating the geometrical intuitions behind Algorithm~\ref{alg:certificate1}.}
\label{fig:certificate1}
\end{figure*}

\subsubsection{\textbf{Smaller Conflicting Sensor Set}}
To generate a compact Boolean constraint that explains a conflict, we aim to find a small set of sensors that cannot all be attack-free. Their existence is guaranteed by the following proposition whose proof exploits the geometric interpretation provided by the hyperplanes $\mathbb{H}_i$.

\begin{proposition}
Let the linear dynamical system $\Sigma_a$ defined in~\eqref{eq:system_attack} be $2\overline{s}$-sparse observable. Let  $\norm{\Psi_i} = 0$ for all $i \in \{1,\hdots,p\}$ and let also  $\epsilon = 0$ be the numerical solver tolerance for Algorithm \ref{alg:check}. 
If $\mathcal{T}$\textsc{-Solve.Check}$(\mathcal{I})$ is \texttt{UNSAT} for a set $\mathcal{I}$, with $\vert \mathcal{I} \vert > p - 2\overline{s}$,
then there exists a subset $\mathcal{I}_{temp} \subset \mathcal{I}$ with  $\vert \mathcal{I}_{temp} \vert \le p - 2\overline{s} + 1$ such that $\mathcal{T}$\textsc{-Solve.Check}$(\mathcal{I}_{temp})$ is also \texttt{UNSAT}.
\label{prop:minSet}
\end{proposition}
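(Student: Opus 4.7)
The plan is to reduce the claim to a purely geometric statement using Proposition~\ref{prop:geom}: $\mathcal{T}$\textsc{-Solve.Check}$(\mathcal{I})$ being UNSAT is equivalent to $\bigcap_{i\in \mathcal{I}} \mathbb{H}_i = \emptyset$. So I would aim to produce a subset $\mathcal{I}_{temp}\subseteq \mathcal{I}$ with $|\mathcal{I}_{temp}|\le p-2\overline{s}+1$ whose associated hyperplanes have empty intersection, from which UNSAT of $\mathcal{T}$\textsc{-Solve.Check}$(\mathcal{I}_{temp})$ follows immediately.

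The first key step is to invoke Proposition~\ref{cor:injectivity}: for every index subset $J\subseteq \{1,\ldots,p\}$ with $|J|\ge p-2\overline{s}$, the observability matrix $\O_{J}$ has trivial kernel. Since $\bigcap_{i\in J}\mathbb{H}_i$ is exactly the affine variety $\{x\in\R^n : \O_J x = Y_J\}$, trivial kernel forces this set to be either empty or a single point. I would apply this to subsets $J\subseteq \mathcal{I}$ of size exactly $p-2\overline{s}$.

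The second step is a case analysis on such a $J$. If $\bigcap_{i\in J}\mathbb{H}_i = \emptyset$, I take $\mathcal{I}_{temp} := J$; then $|\mathcal{I}_{temp}| = p-2\overline{s}\le p-2\overline{s}+1$ and we are done by Proposition~\ref{prop:geom}. Otherwise, the intersection is the singleton $\{x_J\}$ for some unique $x_J\in\R^n$. Because $\bigcap_{i\in \mathcal{I}}\mathbb{H}_i=\emptyset$ by hypothesis and $x_J$ already lies in all hyperplanes indexed by $J$, there must exist at least one index $i^\star \in \mathcal{I}\setminus J$ with $x_J \notin \mathbb{H}_{i^\star}$ (otherwise $x_J$ would belong to every $\mathbb{H}_i$ for $i\in\mathcal{I}$). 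Setting $\mathcal{I}_{temp} := J\cup\{i^\star\}$ then yields
\[
\bigcap_{i\in \mathcal{I}_{temp}} \mathbb{H}_i \;=\; \{x_J\} \cap \mathbb{H}_{i^\star} \;=\; \emptyset,
\]
with $|\mathcal{I}_{temp}| = p-2\overline{s}+1$, as required.

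I do not expect a real obstacle: the argument is essentially a dimension/injectivity count combined with the fact that the full intersection is empty. The only point worth being careful about is that Proposition~\ref{cor:injectivity} applies exactly at the threshold $|J|=p-2\overline{s}$, and that the hypothesis $|\mathcal{I}|>p-2\overline{s}$ guarantees the existence of at least one ``extra'' index $i^\star$ in $\mathcal{I}\setminus J$ to add when the singleton case occurs. Constructively, the procedure suggested by Algorithm~\ref{alg:certificate1} and Figure~\ref{fig:certificate1} corresponds precisely to this scheme (picking $J$ as the $p-2\overline{s}$ indices of smallest residual, then adding the index of largest residual to force a conflict), so the existential proof above also validates the heuristic.
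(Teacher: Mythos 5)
Your proposal is correct and follows essentially the same route as the paper's proof: reduce UNSAT to empty intersection via Proposition~\ref{prop:geom}, pick a subset of size $p-2\overline{s}$ whose hyperplanes either already fail to intersect (done) or, by Proposition~\ref{cor:injectivity}, meet in a single point, and then adjoin one index from $\mathcal{I}$ whose hyperplane misses that point. Your explicit remark that $\vert\mathcal{I}\vert > p-2\overline{s}$ guarantees a spare index to adjoin is a small point the paper leaves implicit, but the argument is the same.
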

\begin{proof}
Consider any set of sensors $\mathcal{I}' \subset \mathcal{I}$ such that \mbox{$\vert \mathcal{I}' \vert = p - 2\overline{s}$} and $\bigcap_{i \in \mathcal{I}'} \mathbb{H}_i$ is not empty. If such set $\mathcal{I}'$ does not exist, then the result follows trivially. If the set $\mathcal{I}' $ exists, then it follows from Proposition \ref{cor:injectivity} that $\O_{\mathcal{I}'}$ has a trivial kernel and hence the intersection $\bigcap_{i \in \mathcal{I}'} \mathbb{H}_i$ is a single point, named $x'$. 
Now, since  $\mathcal{T}$\textsc{-Solve.Check}$(\mathcal{I})$  is \texttt{UNSAT},  it follows from Proposition~\ref{prop:geom} that:
\begin{align*}
\bigcap_{i \in \mathcal{I}} \mathbb{H}_i = \emptyset &\Rightarrow 
\bigcap_{i \in \mathcal{I}'} \mathbb{H}_i \cap \bigcap_{i \in \mathcal{I}\setminus\mathcal{I}'} \mathbb{H}_i  = \emptyset \Rightarrow 
\{x'\} \cap \bigcap_{i \in \mathcal{I}\setminus\mathcal{I}'} \mathbb{H}_i  = \emptyset,
\end{align*}
which in turn implies that there exists at least one sensor $i \in \mathcal{I}\setminus\mathcal{I}'$ such that its hyperplane $\mathbb{H}_i$ does not pass through the point $x'$. Now, we define $\mathcal{I}_{temp}$ as $\mathcal{I}_{temp} = \mathcal{I}' \cup i$ and note that $\vert \mathcal{I}_{temp} \vert \le p - 2\overline{s} + 1$, 
which concludes the proof.
\end{proof}

Using Proposition \ref{prop:minSet}, our objective is to find a small set of hyperplanes that fails to intersect. Hence, if an assignment for the convex constraints is \texttt{UNSAT}, our conjecture is that the $p - 2\overline{s}$ hyperplanes with the lowest (normalized) residuals are most likely to have a common intersection point, which can then be used as a candidate intersection point  for the hyperplanes with the higher (normalized) residuals, one-by-one, until a conflict is detected. A pictorial illustration of this intuition is given in Figure~\ref{fig:heur1}. Based on this intuition, we propose the following procedure, summarized in Algorithm~\ref{alg:certificate1}. 

\begin{algorithm}
\caption{$\mathcal{T}\textsc{-Solve.Certificate-Conflict}({\mathcal{I}}, x)$}
\begin{algorithmic}[1]
\STATE \textbf{Compute normalized residuals} 
\STATE $\quad r := \bigcup_{i \in  \mathcal{I}} \left \{r_i \right \}, \quad r_i := \norm{Y_i - \O_i x }^2/\norm{\O_i}^2, \; i \in {\mathcal{I}}$;
\STATE \textbf{Sort the residual variables} 
\STATE $\quad r\_sorted := \text{sortAscendingly}(r)$;
\STATE \textbf{Pick the index corresponding to the maximum residual} 
\STATE $\quad \mathcal{I}\_max\_r :=  \text{Index}(r\_sorted_{\{\vert \mathcal{I} \vert, \vert \mathcal{I} \vert - 1,\ldots,  p - 2\overline{s} + 1\}})$;
\STATE $\quad \mathcal{I}\_min\_r :=  \text{Index}(r\_sorted_{\{1, \ldots, p - 2\overline{s} \} })$;
\STATE \textbf{Search linearly for the UNSAT certificate}
\STATE $\quad \text{status = \texttt{SAT}}; \quad \text{counter} = 1;$
\STATE $\quad \mathcal{I}\_temp :=  \mathcal{I}\_min\_r \cup \mathcal{I}\_max\_r_{counter}$;
\WHILE{status == \texttt{SAT}}
	\STATE (status, $x$) $:=  \mathcal{T}\textsc{-Solve.Check}(\mathcal{I}\_temp)$;
	\IF{status == \texttt{UNSAT}}
		\STATE $\phi_{\text{conf-cert}}:= \sum_{i \in \mathcal{I}\_temp} b_i \ge 1$;
	\ELSE
		\STATE counter := counter  + 1;
		\STATE $\mathcal{I}\_{temp} :=  \mathcal{I}\_min\_r \cup \mathcal{I}\_max\_r_{counter}$;
	\ENDIF
\ENDWHILE
\STATE \textbf{[Optional] Sort the rest according to $\text{dim}(ker\{\O\})$} 
\STATE$\quad \mathcal{I}\_temp2 = \text{sortAscendingly}(dim(ker\{\O_{\mathcal{I}\_temp}\}))$;
\STATE $\quad \text{status = \texttt{UNSAT}}; \quad \text{counter2} = \vert\mathcal{I}\_temp2 \vert\ - 1;$
\STATE $\quad \mathcal{I}\_temp2 :=  \mathcal{I}\_temp2_{\{1, \dots, counter2\}}$;
\WHILE{status == \texttt{UNSAT}}
	\STATE (status, $x$) $:=  \mathcal{T}\textsc{-Solve.Check}(\mathcal{I}_{temp})$;
	\IF{status == \texttt{SAT}}
		\STATE $\phi_{\text{conf-cert}}:= \sum_{i \in \mathcal{I}\_{temp2}_{\{1, \dots, counter2 + 1\}}} b_i \ge 1$;
	\ELSE
		\STATE counter2 := counter2  - 1;
		\STATE $\mathcal{I}\_temp2 :=  \mathcal{I}\_temp2_{\{1, \dots, counter2\}}$;
	\ENDIF
\ENDWHILE
\STATE \textbf{return} $\phi_{\text{conf-cert}}$
\end{algorithmic}
\label{alg:certificate1}
\end{algorithm}

We first compute the (normalized) residuals $r_i$ for all $i \in \mathcal{I}$, and sort them in ascending order. We then pick the $p - 2\overline{s}$ minimum  (normalized) residuals indexed by $\mathcal{I}\_min\_r$,
and search for one more hyperplane that leads to a conflict with the hyperplanes indexed by $\mathcal{I}\_min\_r$.
To do this, we start by solving the same optimization problem as in Algorithm~\ref{alg:check}, but on the reduced set of hyperplanes indexed by $\mathcal{I}_{temp} = \mathcal{I}\_min\_r\cup \mathcal{I}\_max\_r$, where $\mathcal{I}\_max\_r$ is the index associated with the hyperplane having the maximal (normalized) residual. If this set of hyperplanes intersect in one point, they are labelled as ``non-conflicting'', and we repeat the same process by replacing the hyperplane indexed by $\mathcal{I}\_max\_r$ with the hyperplane associated with the second maximal (normalized) residual from the sorted list, till we reach a conflicting set of hyperplanes. Once the set is discovered, we stop by generating the following, more compact, certificate:
$$ \phi_{\text{conf-cert}}:= \sum_{i \in \mathcal{I}_{temp}} b_i \ge 1.$$
A sample execution of Algorithm~\ref{alg:certificate1} is illustrated in Figure~\ref{fig:timelapse1}.

%The cardinality of $\mathcal{I}_{temp}$ heavily affects the overall execution time of Algorithm~\ref{alg:smt}: the  smaller $|\mathcal{I}_{temp}|$, the more information is learnt and the faster is the convergence of the SAT solver to the correct assignment. For example,  a certificate with $|\mathcal{I}_{temp}| = 1$ would identify exactly one attacked sensor at each step, a substantial improvement with respect to the exponential worst-case complexity of the plain SAT problem, which is NP-complete. On the other hand, to generate $\phi_{\text{conf-cert}}$, we only pay the cost of a linear search over $\mathcal{I}$ and, for each step, a least-square optimization problem, which amounts to an overall complexity that is polynomial. 

Finally, as a post-processing step, we can further reduce the cardinality of $\mathcal{I}_{temp}$ by exploiting the dimension of the hyperplanes corresponding to the index list. Intuitively, the lower the dimension, the more information is provided by the corresponding sensor. For example, a sensor $i$ with \mbox{$\text{dim}(\mathbb{H}_i) = \text{dim}(\ker \O_i) = 0$} corresponds to only one point $\O_i^{-1} Y_i$. This restricts the search space to the unique point and makes it easier to generate a conflict formula. Therefore, to converge faster towards a conflict, we iterate through the indexes in $\mathcal{I}_{temp}$ and remove at each step the one which corresponds to the hyperplane with the highest dimension until we are left with a reduced index set that is still conflicting.

The following result  provides an upper bound for the performance of the proposed heuristic.
\begin{proposition}
Let the linear dynamical system $\Sigma_a$ defined in~\eqref{eq:system_attack} be $2\overline{s}$-sparse observable. Let  $\norm{\Psi_i} = 0$ for all $i \in \{1,\hdots,p\}$ and let also 
$\epsilon = 0$ be the numerical solver tolerance for Algorithm \ref{alg:check}. Then, Algorithm~\ref{alg:smt} using the conflicting UNSAT certificate $\phi_{\text{conf-cert}}$ in Algorithm~\ref{alg:certificate1} is $\delta$-complete (in the sense of Definition \ref{def:deltacom}) with $\delta = 0$.
Moreover, the upper bound on the number of iterations of Algorithm~\ref{alg:smt} is 
%$\overline{s} + \overline{s} \binom{p - 2\overline{s} + 1}{1}$.
%$\binom{p}{p - 2\overline{s}+1} + \overline{s}\binom{p - 2\overline{s} + 1}{1}$.
$\binom{p}{p - 2\overline{s}+1}$.
\label{prop:certifcate1}
\end{proposition}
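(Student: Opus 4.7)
The plan is to split the proof into three pieces: correctness of any terminating \texttt{SAT} outcome (via Lemma \ref{prop:check}), guaranteed termination (via a feasibility invariant showing that the true indicator $b^*$ survives every learned certificate), and the iteration count (via the observation that distinct iterations must produce distinct conflicting sets).

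First I would argue $\delta$-completeness. Whenever Algorithm \ref{alg:smt} exits the \texttt{while} loop, \textsc{SAT-Solve} has returned an assignment $b$ with $|\supp(b)| \le \overline{s}$, enforced by the pseudo-Boolean clause inside $\phi_B$, so $|\overline{\supp}(b)| \ge p - \overline{s}$. Lemma \ref{prop:check} applied to the index set $\mathcal{I} = \overline{\supp}(b)$ then delivers both $\supp(b^*) \subseteq \supp(b)$ and $\|x^* - x\|^2 = 0$, which is precisely the $0$-SAT condition of Definition \ref{def:deltacom}.

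Next I would establish termination by showing that the ground-truth indicator $b^*$, whose existence is guaranteed by Theorem \ref{th:sparse}, remains consistent with every certificate generated by Algorithm \ref{alg:certificate1}. If a certificate corresponds to a set $\mathcal{I}_{temp}$ with $\bigcap_{i \in \mathcal{I}_{temp}} \mathbb{H}_i = \emptyset$, as Proposition \ref{prop:geom} furnishes whenever $\mathcal{T}$\textsc{-Solve.Check} is \texttt{UNSAT}, then $\mathcal{I}_{temp}$ must contain at least one attacked index, since otherwise the true state $x^*$ would lie simultaneously in every $\mathbb{H}_i$ with $i \in \mathcal{I}_{temp}$. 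Hence $\sum_{i \in \mathcal{I}_{temp}} b_i^* \ge 1$, so $b^*$ is never excluded by a learned clause, \textsc{SAT-Solve} cannot return \texttt{UNSAT}, and the outer loop must exit through the \texttt{SAT} branch.

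For the iteration bound, each invocation of Algorithm \ref{alg:certificate1} returns a set $\mathcal{I}_{temp}$ of cardinality exactly $p - 2\overline{s} + 1$: the $p - 2\overline{s}$ indices of smallest normalized residual, augmented by exactly one extra index identified by the inner sweep (whose existence is guaranteed by Proposition \ref{prop:minSet}). If two distinct iterations produced the same $\mathcal{I}_{temp}$, the second SAT assignment would already violate the clause learned in the first, a contradiction. Since there are exactly $\binom{p}{p - 2\overline{s}+1}$ subsets of $\{1,\ldots,p\}$ of this size, the outer loop can iterate at most that many times, yielding the stated bound.

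The main obstacle I anticipate is justifying that the inner linear-search \texttt{while} loop of Algorithm \ref{alg:certificate1} always terminates having found such a conflicting augmentation, rather than exhausting all $2\overline{s}$ high-residual candidates without ever triggering an \texttt{UNSAT}. This reduces to showing that the unique candidate intersection point of the $p - 2\overline{s}$ low-residual hyperplanes (guaranteed to be a single point by Proposition \ref{cor:injectivity}) must be rejected by at least one of the remaining hyperplanes, which is exactly the geometric content of the proof of Proposition \ref{prop:minSet}. Once this invariant is in place, the counting argument combines cleanly with Lemma \ref{prop:check} to give both conclusions of the proposition.
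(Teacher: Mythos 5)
Your proof is correct and follows essentially the same route as the paper: $\delta$-completeness from Lemma~\ref{prop:check} applied to $\mathcal{I}=\overline{\supp}(b)$, certificate size at most $p-2\overline{s}+1$ from Proposition~\ref{prop:minSet}, and the iteration bound by counting subsets of that cardinality. You supply two details the paper leaves implicit --- the invariant that $b^*$ satisfies every learned clause (hence termination through the \texttt{SAT} branch) and the argument that distinct iterations yield distinct conflicting sets --- which strengthens rather than changes the argument.
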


\begin{proof}
$\delta$-Completeness follows from Lemma \ref{prop:check} along with the $2\overline{s}$ observability condition. The upper bound on the number of iterations of Algorithm \ref{alg:smt} can be derived as follows. First, it follows from Proposition \ref{prop:minSet} that each certificate $\phi_{\text{conf-cert}}$ has at most $p - 2\overline{s} + 1$ sensors. Since we know that the algorithm always terminates, the worst case would then happen when the solver exhaustively generates all conflicting sets of cardinality $p - 2\overline{s} + 1$. This leads to a number of iterations equal to
$\binom{p}{p - 2\overline{s}+1}.$

\end{proof}

%\begin{proof}
%$\delta$-Completeness follows from Lemma \ref{prop:check} along with the $2\overline{s}$ observability condition. The upper bound on the number of iterations of Algorithm \ref{alg:smt} can be derived as follows. First, it follows from Proposition \ref{prop:minSet} that each certificate $\phi_{\text{conf-cert}}$ has at most $p - 2\overline{s} + 1$ sensors. The worst case scenario would happen when the solver exhaustively explores all the possible combinations until the conflicting, attacked sensor is found. This leads to a number of iterations equal to $1 + \binom{p - 2\overline{s} + 1}{1}$. This process is then repeated at most $\overline{s}$ times until all the attacked sensors are detected. The total number of iterations will then amount to $ \overline{s} + \overline{s} \binom{p - 2\overline{s} + 1}{1}$ iterations.
%\end{proof}

\subsubsection{\textbf{Agreeable Sensor Set}}
This heuristic aims to find a set of $p - 2\overline{s}$ sensors that all agree on the same $x$. We recall that the $2\overline{s}$-sparse observability condition ensures that the state is fully observable from any set of $p - 2\overline{s}$ sensors. Accordingly, for a given set of sensors, we select the $p - 2\overline{s}$ sensors, hence hyperplanes, that correspond to minimal residuals. We then check whether they all intersect in one point $x$. In such case, we inform the SAT solver that all of these sensors are unattacked, by generating the following certificate:
$$ \phi_{\text{agree-cert}}:= \sum_{i \in \mathcal{I}\_min\_r} b_i = 0, $$ 
where $\mathcal{I}\_min\_r$ is the set of indexes of the  $p - 2\overline{s}$ hyperplanes with the lowest residuals. 

The procedure described above is summarized in Algorithm~\ref{alg:certificate2}. As evident from line 9 of Algorithm~\ref{alg:certificate2},  $\phi_{\text{agree-cert}}$ is not always generated; therefore, we use this heuristic, when it is successful, only as a complement of the previously discussed UNSAT certificate. Moreover, the heuristic itself is not always applicable. In fact, it is still possible to design an attack such that up to $\overline{s}$ attacked sensors agree on a single value of $x$. Hence, an additional condition is required as reflected in the following proposition. 

\begin{proposition}
Let the linear dynamical system $\Sigma_a$ defined in~\eqref{eq:system_attack} be $3\overline{s}$-sparse observable. Let  $\norm{\Psi_i} = 0$ for all $i \in \{1,\hdots,p\}$ and let also
$\epsilon = 0$ be the numerical solver tolerance for Algorithm \ref{alg:check}. Then, Algorithm~\ref{alg:smt} using the agreeable UNSAT certificate $\phi_{\text{agree-cert}}$ in Algorithm~\ref{alg:certificate2} is $\delta$-complete (in the sense of Definition \ref{def:deltacom}) with $\delta = 0$.
Moreover, whenever $\phi_{\text{agree-cert}}$ is generated,  Algorithm \ref{alg:smt} terminates within $\sum_{s = 0}^{\overline{s}} \binom{2\overline{s}}{s}$ iterations.
\end{proposition}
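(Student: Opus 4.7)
The plan is to verify in turn the two claims of the proposition: first the $\delta$-completeness of Algorithm~\ref{alg:smt} when it uses the agreeable certificate, and second the iteration bound that applies whenever $\phi_{\text{agree-cert}}$ is actually fired. The proof will mirror the structure of Proposition~\ref{prop:certifcate1}, but its delicate part will be justifying why the stronger $3\overline{s}$-sparse observability hypothesis is needed.

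\textbf{Correctness of the agreeable certificate.} The heart of the argument is to show that, under $3\overline{s}$-sparse observability, the assertion embedded in $\phi_{\text{agree-cert}}$---that every sensor in $\mathcal{I}\_min\_r$ is attack-free---is never incorrect. Let $\mathcal{I}_A := \mathcal{I}\_min\_r$, so $|\mathcal{I}_A| = p - 2\overline{s}$, and suppose Algorithm~\ref{alg:certificate2} has produced a point $\hat{x}$ with $\O_i \hat{x} = Y_i$ for every $i \in \mathcal{I}_A$. At most $\overline{s}$ of those sensors can actually be under attack, so at least $p - 3\overline{s}$ of them are clean; call this subset $\mathcal{I}_A^{\text{clean}}$. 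For $i \in \mathcal{I}_A^{\text{clean}}$ we have $Y_i = \O_i x^*$, hence $\O_i(\hat{x} - x^*) = 0$. Since $|\mathcal{I}_A^{\text{clean}}| \ge p - 3\overline{s}$, the same reasoning as in Proposition~\ref{cor:injectivity}, now invoked under $3\overline{s}$-sparse observability, gives $\ker \O_{\mathcal{I}_A^{\text{clean}}} = \{0\}$ and forces $\hat{x} = x^*$. Substituting back, $a^*_i = Y_i - \O_i x^* = 0$ for every $i \in \mathcal{I}_A$, so no sensor in $\mathcal{I}_A$ is actually attacked, which proves the assertion of the certificate is correct. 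Because $3\overline{s}$-sparse observability trivially implies $2\overline{s}$-sparse observability, Lemma~\ref{prop:check} continues to apply to the satisfiability checks carried out by $\mathcal{T}\textsc{-Solve.Check}$, and so the SMT loop terminates with a pair $(x,b)$ satisfying $\supp(b^*)\subseteq\supp(b)$ and $\norm{x-x^*} = 0$, giving $\delta$-completeness with $\delta = 0$.

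\textbf{Iteration bound.} Once $\phi_{\text{agree-cert}}$ is appended to $\phi_B$, the SAT solver has $b_i = 0$ locked in for every $i \in \mathcal{I}_A$, so those $p - 2\overline{s}$ sensors are permanently removed from any future attack hypothesis. The only remaining degrees of freedom are the binary assignments for the $2\overline{s}$ sensors in $\{1,\ldots,p\}\setminus \mathcal{I}_A$, still subject to $\sum_{i=1}^p b_i \le \overline{s}$. In the worst case, the solver must enumerate every binary vector in $\B^{2\overline{s}}$ of Hamming weight at most $\overline{s}$, which yields the claimed bound $\sum_{s=0}^{\overline{s}} \binom{2\overline{s}}{s}$.

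The main obstacle lies in the first half, where one must exclude the pathological scenario of $p - 2\overline{s}$ hyperplanes agreeing at a single point even though some of them are attacked. Under the weaker $2\overline{s}$-sparse observability alone this can occur: an adversary can coordinate up to $\overline{s}$ attacks so that the attacked hyperplanes are steered through a spurious common point shared with the remaining $p - 3\overline{s}$ unattacked sensors of $\mathcal{I}_A$. It is precisely this scenario that the strengthened $3\overline{s}$-sparse observability hypothesis eliminates, by providing the extra $\overline{s}$-worth of observability slack that compels any point simultaneously satisfying $p - 2\overline{s}$ sensor equations to coincide with the true state $x^*$.
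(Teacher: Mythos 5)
Your proof is correct and follows essentially the same route as the paper: the first half carries out explicitly the replication of Lemma~\ref{prop:check}'s argument under $3\overline{s}$-sparse observability with $\vert\mathcal{I}\vert \ge p-2\overline{s}$ (which the paper merely states can be done), and the iteration bound is obtained by the same counting argument over the remaining $2\overline{s}$ sensors. If anything, your write-up is more detailed than the paper's, since you make explicit why at least $p-3\overline{s}$ clean sensors in $\mathcal{I}\_min\_r$ force the agreed point to equal $x^*$ and hence why the certificate can never wrongly exonerate an attacked sensor.
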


\begin{proof}
$\delta$-Completeness of Algorithm~\ref{alg:certificate2} is equivalent to showing the soundness and completeness of Algorithm~\ref{alg:check}. It follows from Proposition~\ref{prop:check} that Algorithm~\ref{alg:check} is sound and complete whenever the system is $2 \overline{s}$-sparse observable and when the cardinality of  $\mathcal{I}$ satisfies $\vert \mathcal{I} \vert \ge p - \overline{s}$. Hence, to show the result, it is enough to replicate the proof of Proposition~\ref{prop:check} under the assumption that the system is $3\overline{s}$-sparse observable and the cardinality of $\mathcal{I}$ satisfies instead $\vert \mathcal{I} \vert \ge p - 2\overline{s}$. 

The bound on the number of iterations can be derived as follows. First, we note that $\phi_{\text{agree-cert}}$ assigns $p - 2\overline{s}$ as being unattacked sensors. This in turn forces the solver to search for the attacked sensors in the remaining set of sensors with caridinality $p - (p - 2\overline{s}) = 2\overline{s}$. The bound then follows using the same argument of Proposition~\ref{prop:trivial-cert}.
\end{proof}

\begin{algorithm}[ht]
\caption{$\mathcal{T}\textsc{-Solve.Certificate-Agree}(\mathcal{I}, x)$}
\begin{algorithmic}[1]
\STATE \textbf{Compute individual residuals} 
\STATE $\quad r := \bigcup_{i \in  \mathcal{I}}\left \{r_i \right \}, \quad r_i := \norm{Y_i - \O_i x}^2/\norm{\O_i}^2, \; i \in {\mathcal{I}}$;
\STATE \textbf{Sort the residual variables} 
\STATE $\quad r\_sorted := \text{sortAscendingly}(r)$;
\STATE \textbf{Pick the $p - 2\overline{s}$ indexes corresponding to the minimum residuals} 
\STATE $\quad \mathcal{I}\_min\_r :=  \text{Index}(r\_sorted_{\left \{1, \ldots, p - 2\overline{s} \right \} }))$;
\STATE $\quad (\text{status},x ) :=  \mathcal{T}\textsc{-Solve.Check}(\mathcal{I}\_min\_r)$;
\STATE $\phi_{\text{agree-cert}}:= $\texttt{TRUE};
\IF{status == \texttt{SAT}}
	\STATE $\phi_{\text{agree-cert}}:= \sum_{i \in \mathcal{I}\_min\_r} b_i = 0$;
\ENDIF
\STATE \textbf{return} $\phi_{\text{agree-cert}}$
\end{algorithmic}
\label{alg:certificate2}
\end{algorithm}

\subsection{Soundness and Completeness of Algorithm \ref{alg:smt}, Noiseless Case}
The procedure $\mathcal{T}\textsc{-Solve.Certificate}(\mathcal{I}, x)$ in line  7 of Algorithm~\ref{alg:smt} can be implemented as shown in Algorithm~\ref{alg:certificate}. We are now ready to state the main result of this section, which is a direct consequence of our previous results.

\begin{theorem}
Let the linear dynamical system $\Sigma_a$ defined in~\eqref{eq:system_attack} be $2\overline{s}$-sparse observable. $\norm{\Psi_i} = 0$ for all $i \in \{1,\hdots,p\}$ and let also $\epsilon=0$ be the numerical solver tolerance for Algorithm \ref{alg:check}.  Algorithm~\ref{alg:smt} is $\delta$-complete (in the sense of Definition \ref{def:deltacom}) with $\delta=0$.
\label{th:noiselss}
\end{theorem}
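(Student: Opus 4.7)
The plan is to prove both termination of Algorithm~\ref{alg:smt} and soundness of the final returned assignment, by assembling results already established earlier in the section. Since the statement is explicitly flagged as a direct consequence of the previous propositions, the proof should essentially verify that the three ingredients combine consistently: correctness of $\mathcal{T}$\textsc{-Solve.Check} (Lemma~\ref{prop:check}), soundness of each of the UNSAT certificates that $\mathcal{T}\textsc{-Solve.Certificate}$ may emit (the trivial, conflicting, and, when applicable, agreeable certificates), and the fact that each certificate strictly prunes the Boolean search space.

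First I would argue termination. On any iteration that ends with \texttt{UNSAT}, $\mathcal{T}\textsc{-Solve.Certificate}$ appends a new clause to $\phi_B$; in the worst case this reduces to the trivial certificate $\phi_{\text{triv-cert}}$, which eliminates at least the current $b$ from the space of feasible Boolean assignments. The space of assignments satisfying $\sum_i b_i \le \overline{s}$ has cardinality $\sum_{s=0}^{\overline{s}}\binom{p}{s}$, so by Proposition~\ref{prop:trivial-cert} the loop must exit after finitely many iterations. Since the stronger conflicting/agreeable certificates only prune larger subsets of assignments, replacing the trivial certificate with them preserves termination.

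Next I would address soundness. Upon exit, status is \texttt{SAT}, and the returned pair $(x,b)$ was obtained from $\mathcal{T}\textsc{-Solve.Check}(\overline{\supp}(b))$ with $|\overline{\supp}(b)|\ge p-\overline{s}$ because $b$ satisfies the cardinality clause in $\phi_B$. By Lemma~\ref{prop:check} (applied with $\mathcal{I}=\overline{\supp}(b)$) this forces $\overline{\supp}(b)\subseteq\overline{\supp}(b^*)$, equivalently $\supp(b^*)\subseteq\supp(b)$, and $\|x-x^*\|^2=0$. This is precisely the $\delta$-SAT condition of Definition~\ref{def:deltacom} with $\delta=0$, so the algorithm is $\delta$-complete.

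The subtle point I expect to be the main obstacle is ensuring that the true assignment $b^*$ is never pruned by any emitted certificate, so the search cannot fail to find the unique minimal-support solution guaranteed by Theorem~\ref{th:sparse}. For each certificate this follows from the fact that, in the noiseless case, $x^*$ lies on every hyperplane $\mathbb{H}_i$ for $i\in\overline{\supp}(b^*)$: the trivial certificate merely excludes assignments already shown \texttt{UNSAT}, the conflicting certificate (Algorithm~\ref{alg:certificate1}) only flags subsets whose hyperplanes fail to intersect (Proposition~\ref{prop:geom}), and the agreeable certificate (Algorithm~\ref{alg:certificate2}) only fixes as unattacked a subset of sensors whose hyperplanes all meet at a common point certified by $\mathcal{T}\textsc{-Solve.Check}$ via Lemma~\ref{prop:check}. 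Chaining these observations with Lemma~\ref{prop:check} and the $2\overline{s}$-sparse observability hypothesis closes the argument.
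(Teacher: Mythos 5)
Your overall strategy matches the paper's: the paper offers no written proof of Theorem~\ref{th:noiselss}, presenting it as a direct consequence of Lemma~\ref{prop:check} and the certificate propositions, and your assembly of termination (finite Boolean search space, each certificate pruning at least the current assignment) plus soundness at exit (Lemma~\ref{prop:check} applied to $\mathcal{I}=\overline{\supp}(b)$ with $|\overline{\supp}(b)|\ge p-\overline{s}$) is exactly the intended argument. Your treatment of the trivial and conflicting certificates is also correct: a conflicting set of hyperplanes cannot consist entirely of unattacked sensors, since all unattacked hyperplanes contain $x^*$, so $b^*$ is never pruned by $\phi_{\text{triv-cert}}$ or $\phi_{\text{conf-cert}}$.

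There is, however, one concrete misstep in your final paragraph: you justify the soundness of the agreeable certificate by saying its common intersection point is ``certified by $\mathcal{T}\textsc{-Solve.Check}$ via Lemma~\ref{prop:check}.'' Lemma~\ref{prop:check} requires $|\mathcal{I}|\ge p-\overline{s}$, but Algorithm~\ref{alg:certificate2} invokes the check on a set $\mathcal{I}\_min\_r$ of cardinality $p-2\overline{s}$, so the lemma does not apply. Under mere $2\overline{s}$-sparse observability, a set of $p-2\overline{s}$ sensors containing up to $\overline{s}$ attacked ones can intersect at a single spurious point $x\ne x^*$ (removing the attacked sensors leaves only $p-3\overline{s}$, which need not be observable), and the resulting clause $\sum_{i\in\mathcal{I}\_min\_r} b_i=0$ would then force an attacked sensor to be treated as clean, pruning $b^*$ and destroying completeness. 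This is precisely why the paper's proposition for $\phi_{\text{agree-cert}}$ assumes $3\overline{s}$-sparse observability and reruns the proof of Lemma~\ref{prop:check} with $|\mathcal{I}|\ge p-2\overline{s}$; the guard $p>3\overline{s}$ in Algorithm~\ref{alg:certificate} is a cardinality condition, not an observability condition, so it does not substitute for that hypothesis. To close your argument you must either add the $3\overline{s}$-sparse observability assumption when the agreeable certificate is enabled, or restrict the theorem to the conflicting/trivial certificates only.
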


\begin{algorithm}
\caption{$\mathcal{T}\textsc{-Solve.Certificate}(\mathcal{I}, x)$}
\begin{algorithmic}[1]
\STATE $\phi_{\text{cert}} := \mathcal{T}\textsc{-Solve.Certificate-Conflict}({\mathcal{I}}, x)$;
\IF{$p > 3 \overline{s}$}
	\STATE $\phi_{\text{agree-cert}}:= \mathcal{T}\textsc{-Solve.Certificate-Agree}(\mathcal{I}, x)$;
	\STATE $\phi_{\text{cert}} := \phi_{\text{cert}} \land \phi_{\text{agree-cert}}$;
\ENDIF
\STATE \textbf{return} $\phi_{\text{cert}}$
\end{algorithmic}
\label{alg:certificate}
\end{algorithm}

% %%%%%%%%%%%%%%%%%%%%%%%%%%%%%%%%%%%%%%%%%%%%%%%%%%%%%%%%%%%%%%%%%%%%%%%%%%%%%%%
%\clear page
\section{Completeness in the Presence of Noise}
\label{sec:comp}

%\pierluigi{It is probably worth doing the opposite: given the noise, which is the new $\delta$ that guarantees completeness? This is because we don't have any control on the noise level. Will check this as I get to Sec.~\ref{sec:comp}.}

As discussed in the previous section, \textsc{Imhotep-SMT} can always detect any compromised sensors in the absence of measurement noise ($\norm{\Psi_i} = 0$ for all $i \in \{1,\hdots,p\}$) and when the numerical tolerance is zero ($\epsilon = 0$). In this section, we characterize completeness in the presence of noise and/or numerical tolerance in the solver, by determining to what extent an attack signal can be hidden by noise and/or numerical tolerance, thereby making it unfeasible to reconstruct the true state. Since Algorithm~\ref{alg:smt} consists of multiple invocations of the least-squares problem, the completeness of the detector entirely depends on the correctness of Algorithm~\ref{alg:check} in checking the satisfiability of a Boolean assignment over $b$.  

The completeness of Algorithm~\ref{alg:check} will in turn depend on two major components: (i) the tolerance of the numerical solvers, which is typically  a small value used as a stopping criterion, and can be controlled by the user; (ii) the noise margin intrinsic to the dynamical system model.
% If there is no noise, the first component can be easily accounted for by resorting to the notion of $\delta$-completeness, as discussed in Definition~\ref{def:deltacom}. 
To account for these two components, we replace the satisfiability condition in line~\ref{line:check} of Algorithm~\ref{alg:check} with the following condition:
\begin{equation} \label{eq:noise}
\norm{Y_{\mathcal{I}} - \O_{\mathcal{I}}x} \le \norm{\Psi_{\mathcal{I}}} + \epsilon
\end{equation}
% \begin{algorithm}[!h]
% \begin{algorithmic}
% \STATE \textbf{if} $\norm{Y_{\mathcal{I}} - H^u_{\mathcal{I}}(x)}^2 \le \left(\norm{\Psi_{\mathcal{I}}} + \delta^\half\right)^2$
% \end{algorithmic}
% \end{algorithm}
\noindent where $\epsilon > 0$ is the user-defined tolerance. To characterize soundness and completeness of Algorithm~\ref{alg:check}, we first recall that the solution of the unconstrained least squares problem in Algorithm~\ref{alg:check} is given by:
\begin{align*}
x 	&= \left(\O_{\mathcal{I}}^T \O_{\mathcal{I}}\right)^{-1} \O_{\mathcal{I}}^T Y_{\mathcal{I}}
	= \O_{\mathcal{I}}^{+} Y_{\mathcal{I}}
\end{align*}
where $\O_{\mathcal{I}}^{+} = \left(\O_{\mathcal{I}}^T \O_{\mathcal{I}}\right)^{-1} \O_{\mathcal{I}}^T$ is the Moore-Penrose pseudo inverse of $\O_{\mathcal{I}}$. It is apparent that  soundness and completeness of Algorithm~\ref{alg:check} depends on the properties of the matrix $\O_{\mathcal{I}}^{+}$. Accordingly, we define the following two, technical, quantities.
%However, the set $\mathcal{I}$ can be any subset of $\{1,\ldots,p\}$. Hence, to compute any property that is related to $\O_{\mathcal{I}}^{+}$, we need to consider all the combinations of the set $\mathcal{I}$ which can be combinatorial in nature. To simplify the computations, we define the following quantity $\overline{\mathcal{O}}$ which can be computed in  linear time.
\begin{definition}
Define $\overline{o} \in \R^+$ as:
$$ \overline{o} = \max_{\mathcal{I} \subseteq \{1, \ldots,p\}} \norm{\O_{\mathcal{I}}^{+}}^2$$
%$$\overline{o} = \min_{i \in \{1,\ldots,p\}} \lambda_{\min}^{-1} \left\{ \O^T_{i} \O_{i} \right\}.$$
%Then, for any set $\mathcal{I} \subseteq \{1, \ldots,p\}$, the following holds:
%$$\norm{\O_{\mathcal{I}}^{+}}^2 \le  \overline{o}$$
where $\O_{\mathcal{I}}^{+}$ Moore-Penrose pseudo inverse of $\O_{\mathcal{I}}$.
\label{def:o_bar}
\end{definition}

\begin{defProp}
Let the linear system defined in~\eqref{eq:system_attack} be $2\overline{s}$-sparse observable and
define $\Delta_{s} \in \R^{+}$ as:
\begin{align*}
\Delta_{s} &=\max_{\substack{\Gamma \subset \mathcal{I} \subseteq \{1,\ldots,p\}\\ \vert \Gamma \vert \le \overline{s}, \vert \mathcal{I} \vert \ge p - \overline{s}  }} \lambda_{\max}\left\{ \left(\sum_{i \in \Gamma}\O_i^T \O_i \right) \left(\sum_{i \in \mathcal{I}}\O_i^T \O_i\right)^{-1} \right\}
\end{align*}
Then, for any %set $\mathcal{I} \subseteq \{1, \ldots,p\}$, with $\vert \mathcal{I} \vert > \overline{s}$ and 
$\overline{s}$-sparse attack vector $E$, the following holds:
$$ \norm{(I - \O_{\mathcal{I}}\O_{\mathcal{I}}^{+})E_{\mathcal{I}}}^2 \ge (1 - \Delta_{s}) \norm{E_{\mathcal{I}}}^2$$
with $\Delta_{s}$ strictly less than 1.
\label{prop:appendix1}
\end{defProp}
\begin{proof}
%The result follows by direct computations as follows. 
First define the set $\Gamma \subset \mathcal{I}$ as the set of indices on which the attack vector $E$ is supported and note that $E_{\overline{\Gamma}} = 0$.
Hence:
\begin{align*}
\norm{(I - \O_{\mathcal{I}}\O_{\mathcal{I}}^{+})E_{\mathcal{I}}}^2 
&= E_{\mathcal{I}}^T \left(I - \O_{\mathcal{I}}\O_{\mathcal{I}}^{+}\right)^2 E_{\mathcal{I}} 
\stackrel{(a)}{=} E_{\mathcal{I}}^T \left(I - \O_{\mathcal{I}}\O_{\mathcal{I}}^{+}\right) E_{\mathcal{I}} \\
&= E_{\mathcal{I}}^TE_{\mathcal{I}} - E_{\mathcal{I}}^T \O_{\mathcal{I}} (\O_{\mathcal{I}}^T \O_{\mathcal{I}})^{-1} \O_{\mathcal{I}}^T E_{\mathcal{I}} 
\stackrel{(b)}{=} E_{\Gamma}^TE_{\Gamma} - E_{\Gamma}^T \O_{\Gamma} (\O_{\mathcal{I}}^T \O_{\mathcal{I}})^{-1} \O_{\Gamma}^T E_{\Gamma}
\end{align*}
where, equality $(a)$ follows from the fact that the matrix $I - \O_{\mathcal{I}}\O_{\mathcal{I}}^{+}$ is idempotent and equality $(b)$ follows from the definition of the set $\Gamma$. The second term can be bounded as:
\begin{align*}
E_{\Gamma}^T \O_{\Gamma} &(\O_{\mathcal{I}}^T \O_{\mathcal{I}})^{-1} \O_{\Gamma}^T E_{\Gamma}
\le \lambda_{\max}\{\O_{\Gamma} (\O_{\mathcal{I}}^T \O_{\mathcal{I}})^{-1} \O_{\Gamma}^T\} E_{\Gamma}^T E_{\Gamma}
\end{align*}
Hence, to show that the result holds, we need to show that the inequality:
\begin{align}
 \lambda_{\max}\{\O_{\Gamma} (\O_{\mathcal{I}}^T \O_{\mathcal{I}})^{-1} \O_{\Gamma}^T\} < 1
 \label{eq:toBeShown}
 \end{align}
holds for any set $\mathcal{I}$ and $\Gamma \subset \mathcal{I}$ with $\vert\Gamma\vert \le \overline{s}$ and $\vert\mathcal{I}\vert \ge p - \overline{s}$. First, recall that for any two matrices $A$ and $B$ with appropriate dimensions, $\lambda_{\max}\{AB\} = \lambda_{\max}\{BA\}$. Hence, we can rewrite \eqref{eq:toBeShown} as:
$$\lambda_{\max}\{\O_{\Gamma}^T\O_{\Gamma} (\O_{\mathcal{I}}^T \O_{\mathcal{I}})^{-1} \} < 1.
 $$
Now notice that: 
\begin{align*}
\O_{\mathcal{I}}^T \O_{\mathcal{I}} 
	&= \sum_{i \in \mathcal{I}} \O_i^T\O_i 
	= \sum_{i \in \Gamma} \O_i^T\O_i +  \sum_{i \in \mathcal{I}\setminus\Gamma} \O_i^T\O_i \\
	&=\O_{\Gamma}^T \O_{\Gamma} + \O_{\mathcal{I}\setminus\Gamma}^T \O_{\mathcal{I}\setminus\Gamma}
\end{align*}
and rewrite \eqref{eq:toBeShown} as:
$$ \lambda_{\max}\left\{\O_{\Gamma}^T\O_{\Gamma} \left(\O_{\Gamma}^T \O_{\Gamma} + \O_{\mathcal{I}\setminus\Gamma}^T \O_{\mathcal{I}\setminus\Gamma} \right)^{-1} \right\} < 1$$
where the set $\mathcal{I}\setminus\Gamma$ has a cardinality of at least $p - 2\overline{s}$. Hence, it follows from the $2s$-sparse observability condition that the matrix $\O_{\mathcal{I}\setminus\Gamma}^T \O_{\mathcal{I}\setminus\Gamma}$ is positive definite and therefore we can apply Proposition \ref{prop:helper} in the appendix to show that the statement holds.
\end{proof}

Using these two quantities, we can state our main result, which is the noisy version of Theorem \ref{th:noiselss}.
\begin{theorem}
Let the linear system defined in~\eqref{eq:system_attack} be $2\overline{s}$-sparse observable, and let $\epsilon > 0$ be the numerical solver tolerance. Then, for any attack $E_i$ satisfying:
\begin{align}
\norm{E_i}^2 > \left(\frac{2}{1 - \Delta_{s}} \right) \norm{\Psi}^2 + \frac{\epsilon}{1 - \Delta_{s}},
\label{eq:Enorm}
\end{align}
Algorithm~\ref{alg:smt}, modified as in~\eqref{eq:noise}, is $\delta$-complete with \mbox{$\delta = \overline{o} \norm{\Psi}^2$}.
\label{th:smt}
\end{theorem}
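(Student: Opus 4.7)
The plan is to reduce $\delta$-completeness of Algorithm~\ref{alg:smt} to a case analysis of the noisy satisfiability check (Algorithm~\ref{alg:check} with its check replaced by \eqref{eq:noise}), after which the termination argument of Theorem~\ref{th:noiselss} carries over unchanged. Two facts will suffice: (i) if the candidate set $\mathcal{I}$ satisfies $\mathcal{I} \subseteq \overline{\supp}(b^*)$ (no attacked sensor in $\mathcal{I}$) and $|\mathcal{I}|\ge p-2\overline{s}$, then the check returns \texttt{SAT} and the computed $x$ satisfies $\norm{x^* - x}^2 \le \overline{o}\,\norm{\Psi}^2$; and (ii) whenever $\mathcal{I}$ contains a sensor $i$ whose attack magnitude satisfies \eqref{eq:Enorm}, the check returns \texttt{UNSAT}. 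Together these force any assignment $b$ on which Algorithm~\ref{alg:smt} terminates to satisfy $\supp(b^*) \subseteq \supp(b)$, with the associated $x$ within $\delta = \overline{o}\norm{\Psi}^2$ of $x^*$, which is precisely the $\delta$-\texttt{SAT} requirement of Definition~\ref{def:deltacom}.

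For (i), I would first invoke the $2\overline{s}$-sparse observability hypothesis together with Proposition~\ref{cor:injectivity} to conclude that $\O_{\mathcal{I}}$ has a trivial kernel, so $\O_{\mathcal{I}}^{+}\O_{\mathcal{I}} = I$. Substituting $Y_{\mathcal{I}} = \O_{\mathcal{I}}x^* + \Psi_{\mathcal{I}}$ into the unconstrained least-squares solution yields $x - x^* = \O_{\mathcal{I}}^{+}\Psi_{\mathcal{I}}$, whence $\norm{x - x^*}^2 \le \norm{\O_{\mathcal{I}}^{+}}^2 \norm{\Psi_{\mathcal{I}}}^2 \le \overline{o}\,\norm{\Psi}^2$ by Definition~\ref{def:o_bar}; simultaneously, the residual equals $(I - \O_{\mathcal{I}}\O_{\mathcal{I}}^{+})\Psi_{\mathcal{I}}$ and has norm bounded by $\norm{\Psi_{\mathcal{I}}}$, so \eqref{eq:noise} is satisfied.

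For (ii), I would use the projection identity $(I - \O_{\mathcal{I}}\O_{\mathcal{I}}^{+})\O_{\mathcal{I}} = 0$ to rewrite the residual as $Y_{\mathcal{I}} - \O_{\mathcal{I}}x = (I - \O_{\mathcal{I}}\O_{\mathcal{I}}^{+})(E_{\mathcal{I}} + \Psi_{\mathcal{I}})$. The heart of the argument is to lower-bound the attack component using Proposition~\ref{prop:appendix1}, which gives $\norm{(I - \O_{\mathcal{I}}\O_{\mathcal{I}}^{+})E_{\mathcal{I}}}^2 \ge (1 - \Delta_{s})\norm{E_{\mathcal{I}}}^2 \ge (1-\Delta_s)\norm{E_i}^2$, and to upper-bound the noise component via the idempotence of the projector, giving $\norm{(I - \O_{\mathcal{I}}\O_{\mathcal{I}}^{+})\Psi_{\mathcal{I}}}^2 \le \norm{\Psi}^2$. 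Combining the two through the squared reverse-triangle inequality $\norm{a + b}^2 \ge \tfrac{1}{2}\norm{a}^2 - \norm{b}^2$ would then yield
\[\norm{Y_{\mathcal{I}} - \O_{\mathcal{I}} x}^2 \;\ge\; \tfrac{1}{2}(1 - \Delta_{s})\norm{E_i}^2 - \norm{\Psi}^2,\]
and the magnitude bound \eqref{eq:Enorm} makes the right-hand side exceed the threshold prescribed by \eqref{eq:noise}, so \texttt{UNSAT} is declared.

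The main obstacle I expect is to bookkeep the constants so that \eqref{eq:Enorm} matches \eqref{eq:noise} exactly: the interpretation of \eqref{eq:noise} (as a bound on the residual 2-norm or on its square) affects the precise factor ($2$ vs.\ $4$) in front of $\norm{\Psi}^2$ and the corresponding factor in front of $\epsilon$, and one must also check that Proposition~\ref{prop:appendix1} applies here with $|\mathcal{I}| \ge p - \overline{s}$ as required. Once (i) and (ii) are in place, the termination argument and the handling of both the trivial and the compact UNSAT certificates carry over verbatim from Theorem~\ref{th:noiselss}, since those arguments depend only on the correctness of the verdict returned by the check, not on the particular threshold used.
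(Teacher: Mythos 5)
Your proposal follows essentially the same route as the paper: the same reduction to the correctness of the modified check, the same decomposition of the optimal residual as $(I-\O_{\mathcal{I}}\O_{\mathcal{I}}^{+})(E_{\mathcal{I}}+\Psi_{\mathcal{I}})$, the same lower bound on the attack component via Proposition~\ref{prop:appendix1}, the same upper bound on the noise component via idempotence of the projector, and the same computation $x-x^*=\O_{\mathcal{I}}^{+}\Psi_{\mathcal{I}}$ yielding $\delta=\overline{o}\norm{\Psi}^2$; your observation that Proposition~\ref{prop:appendix1} applies is also correct, since the SAT solver only proposes assignments with $\vert\supp(b)\vert\le\overline{s}$, hence $\vert\mathcal{I}\vert\ge p-\overline{s}$. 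The one substantive divergence is the step that combines the two bounds, and it is exactly where your constants fail to land. The paper uses $\norm{a+b}^2\ge\left\vert\,\norm{a}^2-\norm{b}^2\right\vert$ (attributed to the reverse triangle inequality) and reads~\eqref{eq:noise} as a bound on the \emph{squared} residual, so that
\begin{align*}
(1-\Delta_s)\norm{E_{\mathcal{I}}}^2 > 2\norm{\Psi_{\mathcal{I}}}^2+\epsilon
\;\Rightarrow\;
\norm{(I-\O_{\mathcal{I}}\O_{\mathcal{I}}^{+})E_{\mathcal{I}}}^2-\norm{(I-\O_{\mathcal{I}}\O_{\mathcal{I}}^{+})\Psi_{\mathcal{I}}}^2 > \norm{\Psi_{\mathcal{I}}}^2+\epsilon
\end{align*}
hits the threshold exactly. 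Your substitute $\norm{a+b}^2\ge\tfrac12\norm{a}^2-\norm{b}^2$ is a valid inequality (arguably safer than the paper's, which overstates what the reverse triangle inequality gives when $\norm{a}>\norm{b}$), but with the constants of~\eqref{eq:Enorm} it only produces $\norm{Y_{\mathcal{I}}-\O_{\mathcal{I}}x}^2>\epsilon/2$, which does not clear the threshold $\norm{\Psi_{\mathcal{I}}}^2+\epsilon$; closing the argument your way requires strengthening~\eqref{eq:Enorm} to roughly $\norm{E_i}^2>\tfrac{4}{1-\Delta_s}\norm{\Psi}^2+\tfrac{2\epsilon}{1-\Delta_s}$. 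You flag this bookkeeping problem yourself but leave it unresolved, so as written the proposal establishes the theorem only for a strictly smaller class of attacks than the statement claims; everything else, including part~(i) and the inheritance of termination from the noiseless analysis, matches the paper.
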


\begin{proof}
To prove the result, we need to show that the check~\eqref{eq:noise}, resulting in $\delta$-satisfiability, is satisfied if and only if no sensor in $\mathcal{I}$ is under attack.

If no sensor is under attack, condition~\eqref{eq:noise} is trivially satisfied. Therefore, we focus on proving the reverse implication, showing that if at least one sensor $i_a \in \mathcal{I}$ is under attack, then~\eqref{eq:noise} does not hold as long as the attack $E_{i_a}$ satisfies~\eqref{eq:Enorm}.
To do so, we consider the set $\mathcal{I}$ which contains the attacked sensor $i_a$. Recall that the solution of the unconstrained least squares problem in Algorithm~\ref{alg:check} is given by:
\begin{align*}
x 	&= \left(\O_{\mathcal{I}}^T \O_{\mathcal{I}}\right)^{-1} \O_{\mathcal{I}}^T Y_{\mathcal{I}}
	= \O_{\mathcal{I}}^{+} Y_{\mathcal{I}}
\end{align*}
where $\O_{\mathcal{I}}^{+} = \left(\O_{\mathcal{I}}^T \O_{\mathcal{I}}\right)^{-1} \O_{\mathcal{I}}^T$ is the Moore-Penrose pseudo inverse of $\O_{\mathcal{I}}$. Hence, the value of the objective function at the optimal point can be bounded from below as:
\begin{align}
\norm{Y_{\mathcal{I}} - \O_{\mathcal{I}} x }^2
	&\stackrel{(a)}{=} \norm{Y_{\mathcal{I}} - \O_{\mathcal{I}} \O_{\mathcal{I}}^{+} Y_{\mathcal{I}} }^2 \nonumber 
	\stackrel{(b)}{=} \norm{ (I - \O_{\mathcal{I}}\O_{\mathcal{I}}^{+} ) (\O_{\mathcal{I}} x^* + \Psi_{\mathcal{I}} + E_{\mathcal{I}})}^2 \nonumber \\
	&\stackrel{(c)}{=} \norm{(I - \O_{\mathcal{I}}\O_{\mathcal{I}}^{+} ) (\Psi_{\mathcal{I}} + E_{\mathcal{I}})}^2 %\nonumber 
	\stackrel{(d)}{\ge} \left\vert \norm{(I - \O_{\mathcal{I}}\O_{\mathcal{I}}^{+} ) E_{\mathcal{I}}}^2 - \norm{(I - \O_{\mathcal{I}}\O_{\mathcal{I}}^{+} ) \Psi_{\mathcal{I}}}^2 \right\vert
\label{eq:bound1}
\end{align}
where (a) follows from~\eqref{eq:y_equality}; $(b)$ follows from the definition of $Y_{\mathcal{I}}$; $(c)$ follows from the fact that $\O_{\mathcal{I}}\O_{\mathcal{I}}^{+} \O_{\mathcal{I}} = \O_{\mathcal{I}}$;  and the  inequality  $(d)$  follows from the inverse triangular inequality.

On the other hand, the condition on the attack signal~\eqref{eq:Enorm} implies that:
\begin{align*}
\norm{E_{i_a}}^2 &> \left(\frac{2}{1 - \Delta_{s}} \right) \norm{\Psi}^2 + \frac{\epsilon}{1 - \Delta_s} 
%&= \sum_{i \in \{1,\ldots,p \}}  \left(\frac{2}{1 - \Delta_{s}} \right) \norm{\Psi_i}^2 + \frac{\epsilon}{1 - \Delta_s}  \\
%& \ge \sum_{i \in \mathcal{I}}  \left(\frac{2}{1 - \Delta_{s}} \right) \norm{\Psi_i}^2 + \frac{\epsilon}{1 - \Delta_s} \\
 \ge \left(\frac{2}{1 - \Delta_{s}}\right) \norm{\Psi_{\mathcal{I}}}^2 + \frac{\epsilon}{1 - \Delta_s}
\end{align*}
Hence, by noticing that $\norm{E_{\mathcal{I}}}^2 \ge \norm{E_{i_a}}^2$, we conclude that:
\begin{align}
%&\stackrel{}{\Rightarrow} \sum_{i \in \mathcal{I}} \norm{E_i}^2 > \sum_{i \in \mathcal{I}}  \left(\frac{2}{1 - \Delta_{s}} \right) \norm{\Psi_i}^2 + \frac{\epsilon}{1 - \Delta_s} \nonumber \\
\norm{E_{\mathcal{I}}}^2 &> \left(\frac{2}{1 - \Delta_{s}}\right) \norm{\Psi_{\mathcal{I}}}^2 + \frac{\epsilon}{1 - \Delta_s} \nonumber \\
&\stackrel{}{\Rightarrow} (1 - \Delta_{s}) \norm{E_{\mathcal{I}}}^2 > \norm{\Psi_{\mathcal{I}}}^2 + \norm{\Psi_{\mathcal{I}}}^2 + \epsilon \nonumber \\
&\stackrel{(e)}{\Rightarrow} (1 - \Delta_{s}) \norm{E_{\mathcal{I}}}^2 > \norm{\Psi_{\mathcal{I}}}^2 + \norm{I - \O_{\mathcal{I}}\O_{\mathcal{I}}^{+}}^2\norm{\Psi_{\mathcal{I}}}^2 + \epsilon \nonumber \\
&\stackrel{(f)}{\Rightarrow} (1 - \Delta_{s}) \norm{E_{\mathcal{I}}}^2 > \norm{\Psi_{\mathcal{I}}}^2 + \norm{(I - \O_{\mathcal{I}}\O_{\mathcal{I}}^{+})\Psi_{\mathcal{I}}}^2 + \epsilon \nonumber \\
&\stackrel{(g)}{\Rightarrow} \norm{(I - \O_{\mathcal{I}}\O_{\mathcal{I}}^{+}) E_{\mathcal{I}}}^2 > \norm{\Psi_{\mathcal{I}}}^2 + \norm{(I - \O_{\mathcal{I}}\O_{\mathcal{I}}^{+})\Psi_{\mathcal{I}}}^2 + \epsilon \nonumber \\
&\stackrel{}{\Rightarrow} \norm{(I - \O_{\mathcal{I}}\O_{\mathcal{I}}^{+}) E_{\mathcal{I}}}^2 -  \norm{(I - \O_{\mathcal{I}}\O_{\mathcal{I}}^{+})\Psi_{\mathcal{I}}}^2  > \norm{\Psi_{\mathcal{I}}}^2 + \epsilon \nonumber \\
&\stackrel{(h)}{\Rightarrow} \left\vert \norm{(I - \O_{\mathcal{I}}\O_{\mathcal{I}}^{+}) E_{\mathcal{I}}}^2 -  \norm{(I - \O_{\mathcal{I}}\O_{\mathcal{I}}^{+})\Psi_{\mathcal{I}}}^2  \right\vert > \norm{\Psi_{\mathcal{I}}}^2 + \epsilon
\label{eq:bound2}
\end{align}
where, the implication $(e)$ follows from the fact that the matrix $I - \O_{\mathcal{I}}\O_{\mathcal{I}}^{+}$ is idempotent and hence $\norm{I - \O_{\mathcal{I}}\O_{\mathcal{I}}^{+}}^2 \le 1$; $(f)$ follows from the properties of the induced norm which implies that $\norm{(I - \O_{\mathcal{I}}\O_{\mathcal{I}}^{+})\Psi_{\mathcal{I}}} \le \norm{I - \O_{\mathcal{I}}\O_{\mathcal{I}}^{+}}\norm{\Psi_{\mathcal{I}}}$; $(g)$ follows from Proposition \ref{prop:appendix1}; finally, $(h)$ follows from the right hand side of the inequality being positive and hence the left hand side is also positive along with the fact that $\vert a \vert = a$ whenever $a \in \R$ is positive.

Combining the bounds~\eqref{eq:bound1} and~\eqref{eq:bound2} we conclude that the following holds:
$$\norm{Y_{\mathcal{I}} - \O_{\mathcal{I}} x }^2 >  \norm{\Psi_{\mathcal{I}}}^2 + \epsilon$$
which implies that the result of Algorithm \ref{alg:check} is UNSAT whenever~\eqref{eq:Enorm} is satisfied.

The error bound $\delta$ can be then computed directly as:
\begin{align*}
\norm{x^* - x}^2 	&= \norm{x^* - \O_{\mathcal{I}}^{+} Y_{\mathcal{I}}}^2 
			\stackrel{(i)}{=} \norm{\O_{\mathcal{I}}^{+} \Psi_{\mathcal{I}} }^2 
			\le \norm{\O_{\mathcal{I}}^{+}}^2 \norm{\Psi_{\mathcal{I}}}^2 
			\stackrel{(j)}{\le} \overline{o} \norm{\Psi}^2,
\end{align*}
where, the equality $(i)$ follows from the fact that all attacks satisfy \eqref{eq:Enorm} and hence can be detected. Accordingly, the set $\mathcal{I}$ contains only sensors which is attack free and therefore~\eqref{eq:y_equality} can be simplified into: 
$$Y_{\mathcal{I}} = \O_{\mathcal{I}}x^* + \Psi_{\mathcal{I}}.$$ 
Finally, the inequality $(j)$ the follows from the definition of $\overline{o}$ in~\ref{def:o_bar}.
\end{proof}
\begin{remark}
In the previous proof, we rely on the assumption that:
$$ \norm{E_{\mathcal{I}}}^2 > \left(\frac{2}{1 - \Delta_{s}}\right) \norm{\Psi_{\mathcal{I}}}^2 + \frac{\epsilon}{1 - \Delta_s} \nonumber$$
However, since we do not know the set $\mathcal{I}$, which is selected by the underlying SAT solver, we resort to the more conservative assumption:
$$ \norm{E_i}^2 > \left(\frac{2}{1 - \Delta_{s}} \right) \norm{\Psi}^2 + \frac{\epsilon}{1 - \Delta_{s}}$$
that will be used in Theorem \ref{th:noisy2}.
\label{rem:relaxation}
\end{remark}

The previous result characterizes the class of attack signals that will lead to detection. However, a smart attacker may be tempted to inject attack signals which are not detected by the proposed algorithm, but yet increase the estimation error. The following result, characterizes the estimation error in the presence of un-detectable attacks.
\begin{theorem}
Let the linear system defined in~\eqref{eq:system_attack} be $2\overline{s}$-sparse observable, and let $\epsilon > 0$ be the numerical solver tolerance. Algorithm~\ref{alg:smt}, modified as in~\eqref{eq:noise}, returns an estimate $x$ which satisfies:
$$\norm{x^* - x}^2 \le 2\overline{o} \left(1 + \frac{2}{1 - \Delta_s}\right) \norm{\Psi}^2 +  \frac{ 2\overline{o} \epsilon}{1 - \Delta_s}.$$
\label{th:noisy2}
\end{theorem}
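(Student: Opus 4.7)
The plan follows the same architecture as the proof of Theorem \ref{th:smt}, but flips the reasoning: instead of showing that large attacks are detected, we exploit detectability to bound how much an \emph{undetected} attack can perturb the final estimate. Concretely, Theorem \ref{th:smt} together with Remark \ref{rem:relaxation} guarantees that every attack component surviving into the declared-attack-free set $\mathcal{I} = \overline{\supp}(b)$ returned by Algorithm~\ref{alg:smt} must violate~\eqref{eq:Enorm}, so we obtain the complementary bound
$$\norm{E_{\mathcal{I}}}^2 \le \frac{2}{1 - \Delta_s}\norm{\Psi}^2 + \frac{\epsilon}{1 - \Delta_s}$$
on whatever attack energy the SAT layer could not filter out. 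If $\mathcal{I}$ happens to be entirely attack-free the bound is trivial, so we can assume $E_{\mathcal{I}} \ne 0$.

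I would then reuse the closed-form expression for the least-squares solve: since $x = \O_{\mathcal{I}}^{+} Y_{\mathcal{I}}$ and $Y_{\mathcal{I}} = \O_{\mathcal{I}} x^{*} + \Psi_{\mathcal{I}} + E_{\mathcal{I}}$, the identity $\O_{\mathcal{I}}^{+}\O_{\mathcal{I}} = I$ (ensured by $2\overline{s}$-sparse observability through Proposition \ref{cor:injectivity}) yields the clean decomposition
$$x - x^{*} = \O_{\mathcal{I}}^{+} \Psi_{\mathcal{I}} + \O_{\mathcal{I}}^{+} E_{\mathcal{I}}.$$
Applying the elementary inequality $\norm{a+b}^2 \le 2\norm{a}^2 + 2\norm{b}^2$, the submultiplicativity of the induced norm, Definition~\ref{def:o_bar} to upper bound $\norm{\O_{\mathcal{I}}^{+}}^2$ by $\overline{o}$, and $\norm{\Psi_{\mathcal{I}}}^2 \le \norm{\Psi}^2$, gives
$$\norm{x^{*} - x}^2 \le 2\overline{o}\,\norm{\Psi}^2 + 2\overline{o}\,\norm{E_{\mathcal{I}}}^2.$$

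Substituting the undetected-attack bound from the first step and collecting terms produces exactly the claimed expression
$$\norm{x^{*} - x}^2 \le 2\overline{o}\,\norm{\Psi}^2 + 2\overline{o}\left(\frac{2}{1 - \Delta_s}\norm{\Psi}^2 + \frac{\epsilon}{1 - \Delta_s}\right) = 2\overline{o}\left(1 + \frac{2}{1-\Delta_s}\right)\norm{\Psi}^2 + \frac{2\overline{o}\,\epsilon}{1-\Delta_s}.$$
The main obstacle I anticipate is not the algebra, which is essentially bookkeeping driven by the pseudo-inverse bound, but rather justifying cleanly the passage from the per-sensor detectability threshold of Theorem \ref{th:smt} to an aggregate bound on $\norm{E_{\mathcal{I}}}^2$ without picking up an explicit dependence on $\overline{s}$; as already noted in Remark \ref{rem:relaxation}, the set $\mathcal{I}$ chosen by the SAT layer is unknown a priori, so one must adopt the same conservative reading used in the proof of Theorem \ref{th:smt} so that the noise-like term $\norm{\Psi}^2$ (rather than $\norm{\Psi_{\mathcal{I}}}^2$) and the global bound on $\norm{E_{\mathcal{I}}}^2$ are both used uniformly over all admissible $\mathcal{I}$.
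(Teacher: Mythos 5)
Your proposal is correct and follows essentially the same route as the paper's own proof: the decomposition $x - x^* = \O_{\mathcal{I}}^{+}\Psi_{\mathcal{I}} + \O_{\mathcal{I}}^{+}E_{\mathcal{I}}$ via the pseudo-inverse, the bound $\norm{x^*-x}^2 \le 2\overline{o}\norm{\Psi}^2 + 2\overline{o}\norm{E_{\mathcal{I}}}^2$, and the substitution of the undetected-attack bound obtained as the contrapositive of Theorem~\ref{th:smt} together with Remark~\ref{rem:relaxation}. The only cosmetic difference is that you correctly attribute the splitting step to the elementary inequality $\norm{a+b}^2 \le 2\norm{a}^2 + 2\norm{b}^2$, whereas the paper labels it Cauchy--Schwarz.
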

\begin{proof}
The error $\norm{x^* - x}^2$ can be bounded as follows:
\begin{align*}
\norm{x^* - x}^2 	&= \norm{x^* - \O_{\mathcal{I}}^{+} Y_{\mathcal{I}}}^2 
	= \norm{x^* - \O_{\mathcal{I}}^{+} \O_{\mathcal{I}} x^* - \O_{\mathcal{I}}^{+} \Psi_{\mathcal{I}} - \O_{\mathcal{I}}^{+} E_{\mathcal{I}}}^2 
			= \norm{\O_{\mathcal{I}}^{+} \Psi_{\mathcal{I}} - \O_{\mathcal{I}}^{+} E_{\mathcal{I}}}^2 \\
			&\stackrel{(a)}{\le} 2\norm{\O_{\mathcal{I}}^{+}}^2 \norm{\Psi_{\mathcal{I}}}^2 + 2\norm{\O_{\mathcal{I}}^{+}}^2 \norm{E_{\mathcal{I}}}^2\\
			&\stackrel{(b)}{\le} 2\overline{o } \norm{\Psi}^2 + 2\overline{o } \norm{E_{\mathcal{I}}}^2\\
			&\stackrel{(c)}{\le} 2\overline{o} \norm{\Psi}^2 + 2\overline{o} \frac{2}{1 - \Delta_s}\norm{\Psi}^2 + 2\overline{o} \frac{\epsilon}{1 - \Delta_s}\\			
			&= 2\overline{o} \left(1 + \frac{2}{1 - \Delta_s}\right) \norm{\Psi}^2 +  \frac{ 2\overline{o} \epsilon}{1 - \Delta_s}			
\end{align*}
where the inequality $(a)$ follows from Cauchy-Schwarz inequality; $(b)$ from the definition of $\overline{o}$ in~\eqref{def:o_bar}  along with the fact that $\norm{\Psi_{\mathcal{I}}}^2 \le \norm{\Psi}^2$; finally $(c)$ follows from Theorem \ref{th:smt} (along with Remark \ref{rem:relaxation}) which shows that 
only attacks  with norm:
$$\norm{E_\mathcal{I}}^2 \le \left(\frac{2}{1 - \Delta_{s}} \right) \norm{\Psi}^2 + \frac{\epsilon}{1 - \Delta_{s}}$$
are not detected by Algorithm \ref{alg:smt} and hence can affect the estimation error.
\end{proof}
\section{Experimental Results}
\label{sec:results}

We developed our theory solver in \textsc{Matlab}, and interfaced it with the pseudo-Boolean SAT solver \textsc{SAT4J}~\cite{j4sat}. All the experiments were executed on an Intel Core i7 3.4-GHz processor with 8~GB of memory.
To validate our approach, we first compare the effect of the two proposed heuristics on the required number of iterations. We then compare the runtime performance against previously proposed algorithms. Then, we demonstrate the effect of attack detection on the problem of controlling a robotic vehicle under sensor attacks. %While our theoretical results have been presented in the noiseless case, we can then report simulation results that also show the performance of our detector  in a more realistic scenario, under the presence of noise. 

\subsection{Runtime Performance}
To assess the effectiveness of the heuristics introduced in Sec.~\ref{sec:heur}, Figure~\ref{fig:heuristic} shows the number of iterations of \textsc{Imhotep-SMT} when only one of the three certificates, the trivial certificate $\phi_{\text{triv-cert}}$, the conflicting certificate $\phi_{\text{conf-cert}}$, and the joint certificate  $\phi_{\text{conf-cert}}\land\phi_{\text{agree-cert}}$, is used.

In the first experiment (top), we increase the number $s$ of actual sensors under attack for a fixed $\overline{s}= 20$ ($n=25$, $p = 60$). In the second experiment (bottom), we increase both $n$ and $p$ simultaneously, with $p = 3n$, while $p/3$ sensors are under attack, and $\overline{s}= p/3$. In both cases,
the system is constructed to be $3\overline{s}$-sparse observable, with the dimensions of the kernels of $\O_i$ ranging between $n - 1$ and $n - 2$, meaning that the state is ``poorly'' observable from individual sensors. We also show the number of iterations against the theoretical limit in Proposition~\ref{prop:certifcate1}. We observed an average of $50 \times$ reduction in iterations when $\phi_{\text{conf-cert}}$ was used compared to $\phi_{\text{triv-cert}}$, while using both $\phi_{\text{conf-cert}}$ and $\phi_{\text{agree-cert}}$ decreased the number of iterations by a factor of $75$.

We also compared the performance of \textsc{Imhotep-SMT} against the MIQP formulation~\eqref{eq:miqp}, the ETPG algorithm~\cite{YasserETPGarXiv}, and the $l_r/l_1$ decoder~\cite{Hamza_TAC}, with respect to both execution time and estimation error.

The MIQP is solved using the commercial solver \textsc{Gurobi}~\cite{gurobi}, the ETPG algorithm is implemented in \textsc{Matlab},
while the $l_r/l_1$ decoder is implemented using the convex solver \textsc{CVX}~\cite{cvx}. 
Figure \ref{fig:certificate1} reports the numerical results in two test cases. In Figure~\ref{fig:test1}, we fix the number of sensors $p= 20$ and increase the number of system states from $n = 10$ to $n = 150$. In Figure~\ref{fig:test2}, we fix the number of states $n = 50$ and increase the number of sensors from $p=3$ to $p=150$. In both cases, half of the sensors are attacked. 
Our algorithm always outperforms both the ETPG and the $l_r/l_1$ approaches and scales nicely with respect to both $n$ and $p$. In particular, as evident from Figure~\ref{fig:test1}, increasing $n$ has a small effect on the overall execution time, which reflects the fact that the number of constraints to be satisfied does not depend on $n$. Conversely, as shown in Figure~\ref{fig:test2}, as the number of sensors increases, the number of constraints, hence the execution time of our algorithm, also increases. The runtime of the MIQP formulation in (\ref{eq:miqp}) scales worse than our algorithm with $n$, but better with $p$, because \textsc{Gurobi} can efficiently process many conic constraints (whose number scales with $p$) but is more sensitive to the size of each conic constraint (which scales with $n$). 
Finally, Figure~\ref{fig:test1} (bottom) shows that the  $l_r/l_1$ decoder reports incorrect results in multiple test cases, because of its lack of soundness, as discussed in Section~\ref{sec:intro}.

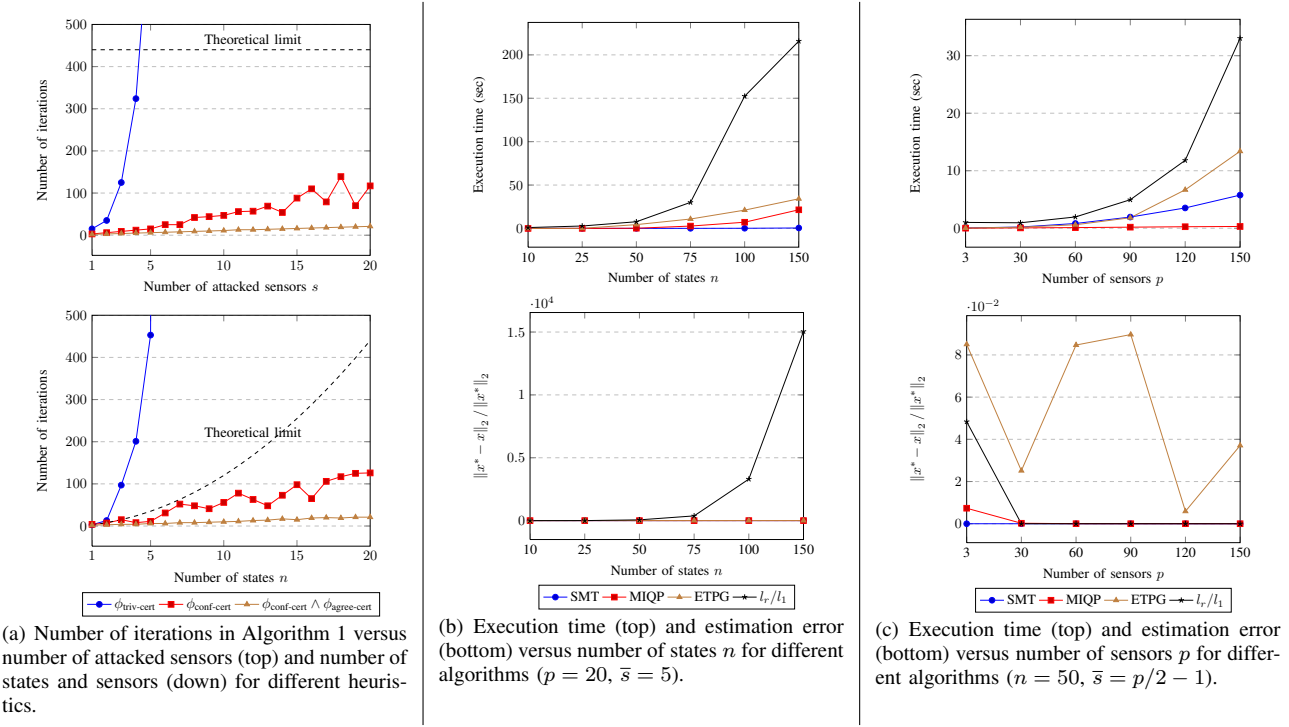
\begin{figure*}[!t]
\centering
{
\begin{tabular}{c|c|c}
	\subfigure[b][Number of iterations in Algorithm~\ref{alg:smt} versus number of attacked sensors (top) and number of states and sensors (down) for different heuristics.
	 ]{\label{fig:heuristic} 
	\begin{tabular}{c}	 
\resizebox{0.26\textwidth}{!}{
	\begin{tikzpicture}
		\begin{axis}[
	    	xlabel=Number of attacked sensors $s$, 
	    	ylabel=Number of iterations,
	    	xtick={1,5,10,15,20},
		ymax=500,
		xmin = 1,
		xmax = 20,
		%symbolic x coords={10,25,50,75,100,150},
	    	legend style={at={(0.5,-0.2)},anchor=north,legend columns=-1},
	    	ymajorgrids=true,
	    	grid style=dashed,
		]
		\addplot % Trivial
			coordinates {(1, 15) (2, 35)(3, 125)(4, 324)(5, 778)(6, 10000)(7, 10000)(8, 10000)(9, 10000)(10, 10000)(11, 10000)(12, 10000)(13, 10000)(14, 10000)(15, 10000)(16, 10000)(17, 10000)(18, 10000)(19, 10000)(20, 10000)};
		\addplot %Conf		
			coordinates {(1, 3) (2, 6)(3, 9)(4, 12)(5, 15)(6, 25)(7, 25)(8, 42)(9, 44)(10, 47)(11, 56)(12, 57)(13, 69)(14, 54)(15, 88)(16, 110)(17, 79)(18, 139)(19, 70)(20, 117)};
		\addplot[mark=triangle*,color=brown] %Conf + agree
			coordinates {(1, 2) (2, 3)(3, 4)(4, 5)(5, 6)(6, 7)(7, 8)(8, 9)(9, 10)(10, 11)(11, 13)(12, 13)(13, 14)(14, 15)(15, 16)(16, 17)(17, 18)(18, 19)(19, 20)(20, 21)};
		\addplot  [domain=1:20,dashed,color=black] {440};
		\node [above] at (axis cs:  12,  442) {Theoretical limit};
%		\legend{$ \phi_{\text{triv-cert}}$,$\phi_{\text{conf-cert}}$,$\phi_{\text{conf-cert}}\land\phi_{\text{agree-cert}}$}
		\end{axis}
		% 25 memory is 95,13,32
	\end{tikzpicture}}
	\\
	\resizebox{0.26\textwidth}{!}{
	\begin{tikzpicture}
		\begin{axis}[
	    	xlabel=Number of states $n$,
	    	ylabel=Number of iterations,
	    	xtick={1,5,10,15,20},
		ymax=500,
		xmin = 1,
		xmax = 20,
		%symbolic x coords={10,25,50,75,100,150},
	    	legend style={at={(0.5,-0.2)},anchor=north,legend columns=-1},
	    	ymajorgrids=true,
	    	grid style=dashed,
		]
		\addplot % Trivial
			coordinates {(1, 3) (2, 13)(3, 97)(4, 201)(5, 453)(6, 10000)(7, 10000)(8, 10000)(9, 10000)(10, 10000)(11, 10000)(12, 10000)(13, 10000)(14, 10000)(15, 10000)(16, 10000)(17, 10000)(18, 10000)(19, 10000)(20, 10000)};
		\addplot %Conf		
			coordinates {(1, 4) (2, 7)(3, 15)(4, 8)(5, 11)(6, 31)(7, 52)(8, 48)(9, 41)(10, 56)(11, 78)(12, 63)(13, 48)(14, 73)(15, 98)(16, 65)(17, 106)(18, 117)(19, 125)(20, 126)};
		\addplot[mark=triangle*,color=brown] %Conf + agree
			coordinates {(1, 2) (2, 3)(3, 4)(4, 4)(5, 6)(6, 6)(7, 8)(8, 8)(9, 9)(10, 10)(11, 11)(12, 13)(13, 14)(14, 17)(15, 15)(16, 19)(17, 20)(18, 19)(19, 21)(20, 21)};
		\addplot  [domain=1:20,dashed,color=black] coordinates {(1, 3) (2, 8)(3, 15)(4, 24)(5, 35)(6, 48)(7, 63) (8, 80)(9, 99)(10, 120)(11, 143)(12, 168)(13, 195)(14, 224)(15, 255)(16, 288)(17, 323)(18, 360)(19, 399)(20, 440)};
		\node [above] at (axis cs:  12,  200) {Theoretical limit};
		\legend{$ \phi_{\text{triv-cert}}$,$\phi_{\text{conf-cert}}$,$\phi_{\text{conf-cert}}\land\phi_{\text{agree-cert}}$}
		\end{axis}
		% 25 memory is 95,13,32
	\end{tikzpicture}}
	\end{tabular}
	}
	&
	%-------------
	\subfigure[b][Execution time (top) and estimation error (bottom) versus number of states $n$ for different algorithms ($p = 20$, $\overline{s} = 5$).
	 ]{\label{fig:test1} 
	\begin{tabular}{c}	 
\resizebox{0.26\textwidth}{!}{
	\begin{tikzpicture}
		\begin{axis}[
	    	xlabel=Number of states $n$,
	    	ylabel=Execution time (sec),
	    	xtick={10,25,50,75,100,150},
		xmin = 10,
		xmax = 150,
		symbolic x coords={10,25,50,75,100,150},
	    	legend style={at={(0.5,-0.2)},anchor=north,legend columns=-1},
	    	ymajorgrids=true,
	    	grid style=dashed,
		]
		\addplot % SMT
			coordinates {(10, 0.150835815) (25, 0.072837016) (50, 0.091151843) (75, 0.128540586) (100, 0.205442241) (150, 0.544454767) };
		\addplot %MIQCP
			coordinates {(10, 0.01320505142211914) (25, 0.05978202819824219) (50, 0.24718999862670898) (75, 2.7412519454956055) (100, 7.174839019775391) (150, 21.495587825775146) };
		\addplot[mark=triangle*,color=brown] %ETPG
			coordinates {(10, 0.054415376) (25, 0.164714279) (50, 4.420559318) (75, 10.85869584) (100, 21.110532127) (150, 34.167959785) }; 
		\addplot %$L_r/L_2$
			coordinates {(10, 1.019596283) (25, 2.8259) (50, 7.7142) (75, 30.048965519) (100, 152.382031462) (150, 215.8006) }; 
%		\legend{SMT,MIQCP,ETPG,$L_r/L_2$}
		\end{axis}
		% 25 memory is 95,13,32
	\end{tikzpicture} }
	\\
	 \resizebox{0.26\textwidth}{!}{
	\begin{tikzpicture}
		\begin{axis}[
	    	xlabel=Number of states $n$,
	    	ylabel=$\norm{x^* - x}/\norm{x^*}$,
	    	xtick={10,25,50,75,100,150},
		xmin=10,
		xmax=150,
		symbolic x coords={10,25,50,75,100,150},
	    	legend style={at={(0.5,-0.2)},anchor=north,legend columns=-1},
	    	ymajorgrids=true,
	    	grid style=dashed,
		]
		\addplot % SMT
			coordinates {(10, 3.51934918096732e-16) (25, 4.18319789516969e-15) (50, 9.06265435178551e-14) (75, 2.69015619397717e-11) (100, 1.76269257887789e-08) (150, 0.582319563658814e-07) };
		\addplot %MIQCP
			coordinates {(10,2.381203744599124e-15) (25,2.3172417275056702e-14) (50, 0.25989566923388424) (75, 0.04955397501154004) (100, 0.611309442984122) (150, 0.811047709485256) };
		\addplot[mark=triangle*,color=brown] %ETPG
			coordinates {(10, 0.144953900582611) (25, 2.91268173257971) (50, 3.30516279285173) (75, 3.75166123703284 ) (100, 4.56744698319547) (150, 3.29512602048372) }; 
		\addplot %$L_r/L_2$
			coordinates {(10, 1.77374080068667e-08) (25, 1.7949364017653) (50, 59.9213250458421) (75, 374.976671319856) (100, 3300.50968806787) (150, 15047.58231956) }; 
		\legend{SMT,MIQP,ETPG,$l_r/l_1$}
		\end{axis}
		% 25 memory is 95,13,32
	\end{tikzpicture}}
	\end{tabular}
	}&
	%-------------
	\subfigure[ 
	Execution time (top) and estimation error (bottom) versus number of sensors $p$  for different algorithms ($n = 50$, $\overline{s} = p/2-1$).]{\label{fig:test2} 
\begin{tabular}{c}
	\resizebox{0.26\textwidth}{!}{
	\begin{tikzpicture}	
		\begin{axis}[
	    	xlabel=Number of sensors $p$,
	    	ylabel=Execution time (sec),
	    	xtick={3,30,60,90,120,150},
		xmin=3,
		xmax=150,
		symbolic x coords={3,30,60,90,120,150},		
	    	legend style={at={(0.5,-0.2)},anchor=north,legend columns=-1},
	    	ymajorgrids=true,
	    	grid style=dashed,
	]
		\addplot % SMT
			coordinates {(3, 0.0460) (30, 0.2129) (60, 0.8241) (90, 1.9462) (120, 3.5297) (150, 5.7615) };
		\addplot %MIQCP
			coordinates {(3, 0.012226104736328125) (30, 0.0613710880279541) (60, 0.11451292037963867) (90, 0.1866769790649414) (120, 0.26425719261169434) (150, 0.308596134185791) };
		\addplot[mark=triangle*,color=brown] %ETPG
			coordinates {(3, 0.0270) (30, 0.1902) (60, 0.6034) (90, 1.8253) (120, 6.6797) (150, 13.3815) };
		\addplot %$L_r/L_2$
			coordinates {(3, 1.0156) (30, 0.9504) (60, 1.9608) (90, 4.9581) (120, 11.8041) (150, 33.0246) }; 
%		\legend{SMT,MIQCP,ETPG,$L_r/L_2$}
		\end{axis}
	\end{tikzpicture}}
	\\
	\resizebox{0.26\textwidth}{!}{
	\begin{tikzpicture}	
		\begin{axis}[
	    	xlabel=Number of sensors $p$,
	    	ylabel=$\norm{x^* - x}/\norm{x^*}$,
	    	xtick={3,30,60,90,120,150},
		xmin=3,
		xmax=150,		
		symbolic x coords={3,30,60,90,120,150},		
	    	legend style={at={(0.5,-0.2)},anchor=north,legend columns=-1},
	    	ymajorgrids=true,
	    	grid style=dashed,
	]
		\addplot % SMT
			coordinates {(3, 0.0483781047233582e-08) (30, 1.21403861608205e-15) (60, 8.60223172881467e-16) (90, 1.14182955103158e-15) (120, 1.26689021883866e-15) (150, 2.59537411335808e-15) };
		\addplot %MIQCP
			coordinates {(3, 0.00735062351880407) (30, 0.00022816214771488813) (60, 2.288939887939634e-05) (90, 1.1770280567728685e-05) (120, 1.3011085399035208e-05) (150, 7.868698983610668e-06) };
		\addplot[mark=triangle*,color=brown] %ETPG
			coordinates {(3, 0.0850225969504879) (30, 0.0251441086194822) (60, 0.084687234990669) (90, 0.0896474079635188) (120, 0.00589022630290299) (150, 0.0370787356765995) };
		\addplot %$L_r/L_2$
			coordinates {(3, 0.0483782026696649) (30, 2.23231491487202e-09) (60, 1.40871834682034e-08) (90, 3.44212824692906e-06) (120, 4.54680892142665e-09) (150, 2.89295827743251e-08) }; 
		\legend{SMT,MIQP,ETPG,$l_r/l_1$}
		\end{axis}
	\end{tikzpicture}}
\end{tabular}	
}
\end{tabular}
}
\caption{Simulation results showing number of iterations, execution time, and estimation error with respect to number of states and number of sensors.}
\label{fig:certificate1}
\end{figure*}

\subsection{Attacking an Unmanned Ground Vehicle}

We apply our algorithms to the model of a UGV, as detailed in~\cite{YasserETPGarXiv, Pajic_ICCPS}, under different types of sensor attacks.  We assume that the UGV moves along straight lines and completely stops before
rotating. Under these assumptions, we can describe the dynamics of the UGV as:
\begin{align*}
	\matrix{\dot{x} \\ \dot{v} } &= \matrix{0 & 1  \\ 0 & \frac{-B}{M} } \matrix{x
	\\ v  } + \matrix{0  \\ \frac{1}{M}  } F,
\end{align*}
where $x$ and $v$ are the states, corresponding to the UGV position and
linear velocity, respectively. The parameters $M$ and $B$  denote the mechanical mass and the translational 
friction coefficient. The inputs to 
the UGV is the force $F$.  The UGV is equipped with a GPS sensor which measures its position and two motor
encoders which measure the translational velocity. The resulting output equation is:
\begin{align*}
y = \matrix{1 & 0  \\ 0 & 1  \\ 0 & 1} \matrix{x \\ v } + \matrix{\psi_1 \\ \psi_2 \\ \psi_3},
\end{align*}
where $\psi_i$ is the measurement noise on the $i$th sensor which is assumed to be bounded. In our experiments, we used $M=0.8$~kg, $B=1$, $|{\psi_1}|^2 = 0.2$~m$^2$,  $|{\psi_2}|^2 = |{\psi_3}|^2 = 0.2$~(m/s)$^2$. 
 
The model is discretized with a time step equal to $0.1$~s. The SMT-based detector uses the discretized model along with sensor measurements to provide an estimate for the state vector, which is then used by a feedback controller to regulate the robot and follow a squared-shape path of length equal to $5$~m.

Figure \ref{fig:performance_tank} shows the performance of the SMT-based detector. The attacker alternates between corrupting the left and the right encoder measurements as shown in Figure \ref{fig:tank_attack_smt}. Three different types of attacks are considered. First, the attacker
corrupts the sensor signal with random noise. The next attack consists of a step function followed by a ramp. Finally, a replay-attack is mounted by replaying the previously measured
UGV velocity. The estimated position and velocity are shown in Figure
\ref{fig:tank_state_smt}. We recall that the SMT-based detector is also able to return the indicator variable vector $b$, denoting which sensors are under attack. Figure \ref{fig:tank_attack_smt} shows both the attack and the corresponding indicator variables as returned by the SMT-based detector.  The proposed algorithm is able to estimate the state and the support of the attack also in the presence of noise.

\begin{figure*}[t] 
\centering 
\subfigure[Estimated position and velocity versus ground truth.]
{\label{fig:tank_state_smt}
\resizebox{0.44\textwidth}{!}{
\includegraphics{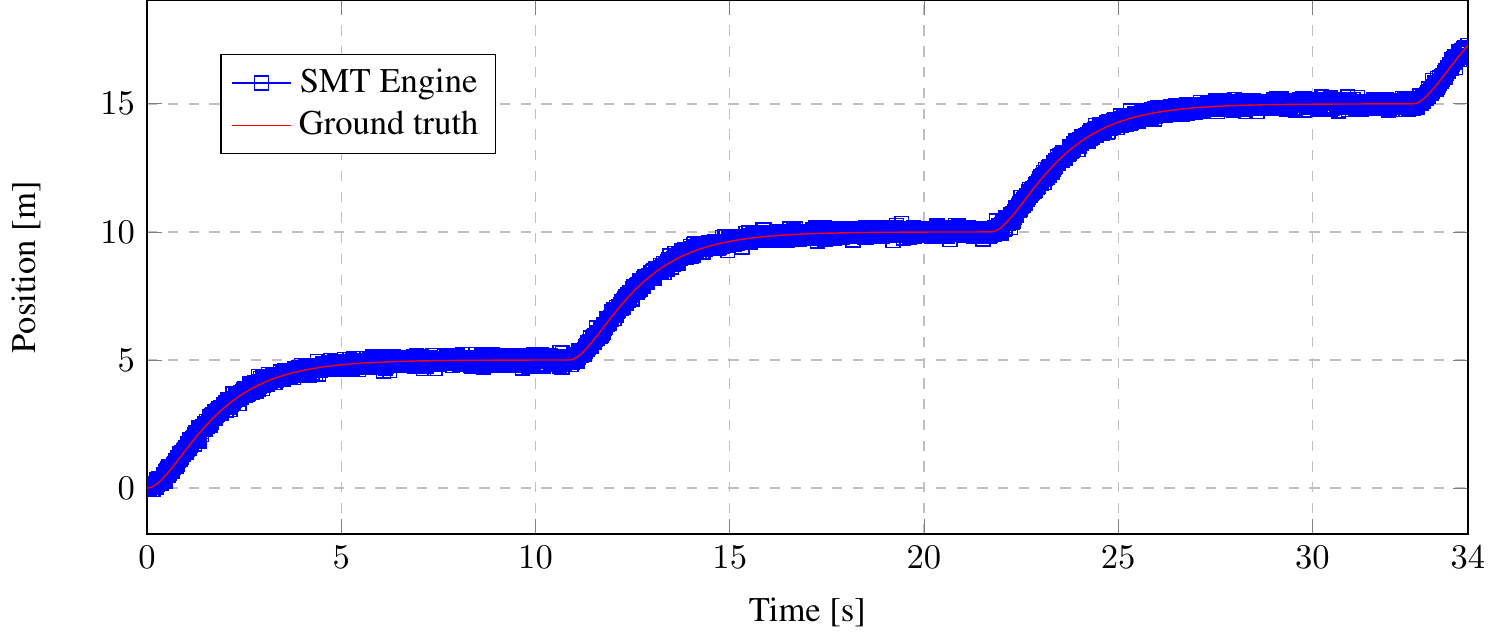}
		}
\resizebox{0.44\textwidth}{!}{
\includegraphics{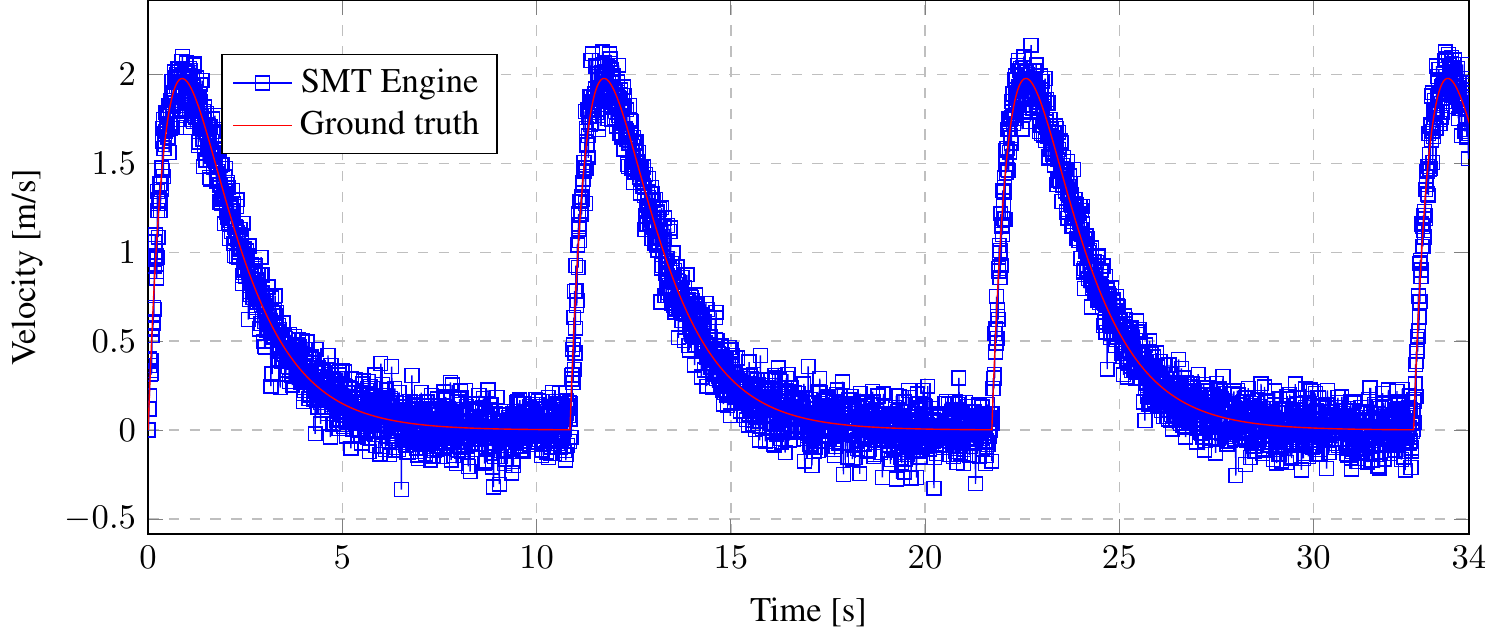}
		}
}\\
\subfigure[Attack signal on the left and right encoders. 
]{\label{fig:tank_attack_smt}
\resizebox{0.44\textwidth}{!}{
\includegraphics{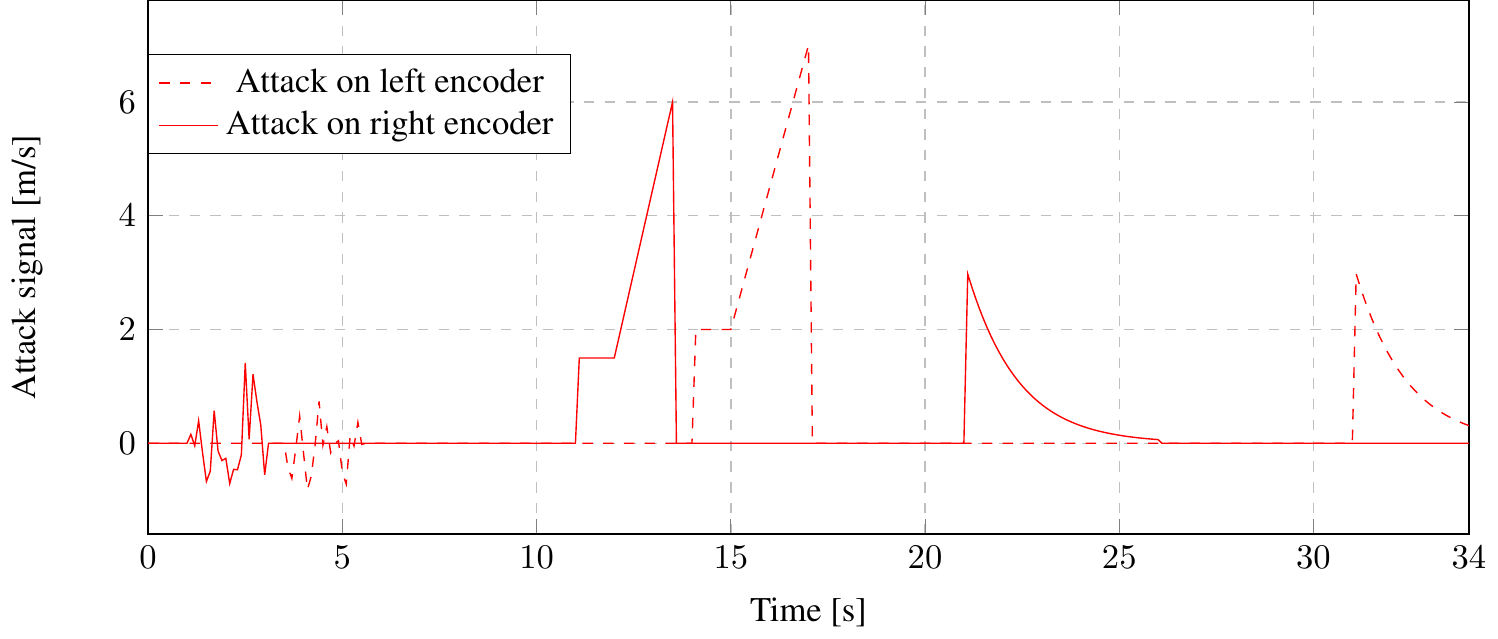}
		}
}
\subfigure[Indicator variables $b$ computed by the proposed SMT-based detector. 
]{		
\resizebox{0.44\textwidth}{!}{
\includegraphics{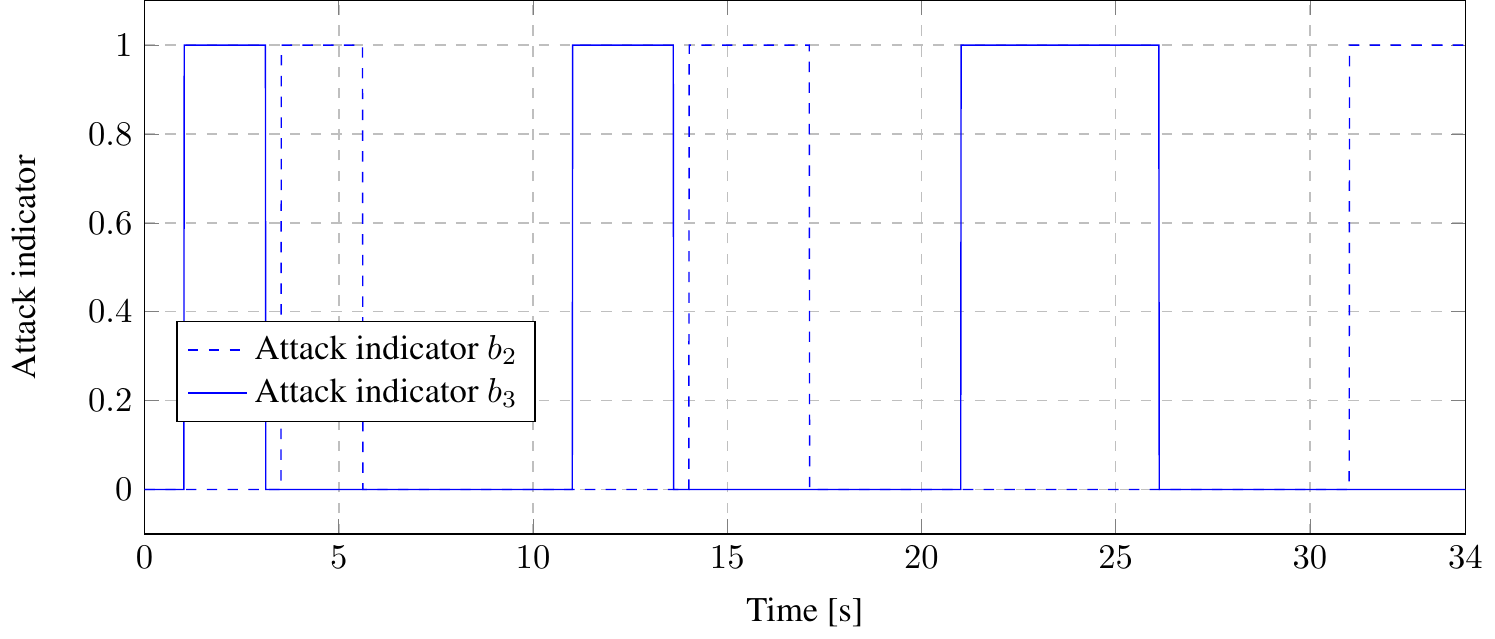}
		}
}
\caption{Performance of the UGV controller in the case when no attack takes place versus the case when the attack signal is applied to the UGV encoders. The objective is to move $5$ m, stop and perform a $90^o$ rotation, and repeat this pattern to follow a square path. The controller uses the proposed SMT-based approach to estimate the UGV states. In both cases we show the linear position and linear velocity (top), and the attack signal and its estimate (bottom).}
\label{fig:performance_tank}
\end{figure*}

% %%%%%%%%%%%%%%%%%%%%%%%%%%%%%%%%%%%%%%%%%%%%%%%%%%%%%%%%%%%%%%%%%%%%%%%%%%%%%%%
%\clearpage
\section{Conclusions}
\label{sec:conclusion}
We proposed a sound and complete algorithm which adopts the Satisfiability-Modulo-Theories paradigm to tackle the intrinsic combinatorial complexity of the secure state estimation problem for linear dynamical systems under sensor attacks. At the heart of our detector lie a set of routines that exploit the geometric structure of the problem to efficiently reason about inconsistency of sensor measurements and enhance the runtime performance. Our approach was  validated via numerical simulations, and demonstrated on an unmanned ground vehicle control problem. Future directions include the extension and the characterization of the proposed algorithm for nonlinear and hybrid dynamical systems.

%%%%%%%%%%%%%%%%%%%%%%%%%%%%%%%%%%%%%%%%%%%%%%%%%%%%%%%%%%%%%%%%%%%%%%%%%%%%%%%%
\bibliographystyle{IEEEtran} 
\bibliography{bibliography2}

%%%%%%%%%%%%%%%%%%%%%%%%%%%%%%%%%%%%%%%%%%%%%%%%%%%%%%%%%%%%%%%%%%%%%%%%%%%%%%%%
\clearpage
\appendix

\textbf{Fact 1:} For any two square matrices $A$ and $B$, both $AB$ and $BA$ have the same eigenvalues.

\textbf{Fact 2:} If $I - A$ is a positive definite matrix, then all eigenvalues of $A$ are strictly less than 1.
%\begin{proof}
%Since $I - A$ is positive definite and hence symmetric, we conclude that $A$ is also symmetric matrix. Therefore, $A$ can be written as $V \Sigma V^T$ with $V$ is unitary and $\Sigma$ is a diagonal matrix. Now, we can write:
%\begin{align*}
%I - A = I - V \Sigma V^T = V (I - \Sigma) V^T
%\end{align*}
%But it follows from the assumption that $I - A$ is positive definite and hence all entries of the diagonal matrix $I - \Sigma$ are strictly greater than zero which in turn imply that all elements of $\Sigma$ are strictly less than 1.
%\end{proof}

\begin{proposition}
Given a positive semidefinite matrix $A$ and a positive definite matrix $B$ of the same dimension, the following holds:
$$ \lambda \{A ( A+ B )^{-1}\} < 1$$
\label{prop:helper}
\end{proposition}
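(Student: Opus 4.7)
My plan is to reduce the problem to a symmetric eigenvalue statement, so that Fact~2 can be applied directly.

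First, I will symmetrize $A(A+B)^{-1}$. Since $B$ is positive definite and $A$ is positive semidefinite, $A+B$ is positive definite and hence $(A+B)^{1/2}$ and $(A+B)^{-1/2}$ exist. Observe that
\[
A(A+B)^{-1} \;=\; (A+B)^{1/2}\Bigl[(A+B)^{-1/2} A (A+B)^{-1/2}\Bigr](A+B)^{-1/2},
\]
so $A(A+B)^{-1}$ is similar to the symmetric matrix $M := (A+B)^{-1/2}A(A+B)^{-1/2}$. In particular, the two matrices share the same spectrum. (Alternatively, one could invoke Fact~1 directly by writing $A(A+B)^{-1}$ and $(A+B)^{-1}A$ via a cyclic rearrangement; the square-root split is cleaner.)

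Next, I will exploit the identity $A + B - A = B$ to control the complement $I - M$. A direct computation gives
\[
I - M \;=\; (A+B)^{-1/2}\bigl[(A+B) - A\bigr](A+B)^{-1/2} \;=\; (A+B)^{-1/2}\,B\,(A+B)^{-1/2}.
\]
Since $B$ is positive definite and $(A+B)^{-1/2}$ is a nonsingular symmetric matrix, the right-hand side is a congruence transform of a positive definite matrix, hence itself positive definite. By Fact~2, every eigenvalue of $M$ is strictly less than $1$.

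Finally, combining the two steps, every eigenvalue of $A(A+B)^{-1}$ coincides with an eigenvalue of $M$, and therefore is strictly less than $1$, which is exactly the claim. The only subtlety to guard against is ensuring that $(A+B)^{-1/2}$ is well-defined and that congruence preserves definiteness; both follow from $A+B \succ 0$, so I do not anticipate any real obstacle.
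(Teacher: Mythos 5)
Your proof is correct and follows essentially the same route as the paper's: both reduce to the symmetric matrix $(A+B)^{-1/2}A(A+B)^{-1/2}$ (you via an explicit similarity, the paper via Fact~1 applied to the square-root factorization of $(A+B)^{-1}$), and both conclude by observing that $I - (A+B)^{-1/2}A(A+B)^{-1/2} = (A+B)^{-1/2}B(A+B)^{-1/2}$ is positive definite and invoking Fact~2. No substantive difference.
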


\begin{proof}
It follows from the positive (semi)definiteness assumptions of $A$ and $B$  that $(A+B)^{-1}$ is positive definite matrix and hence can be written using its square root matrix as:
$$ (A+B)^{-1} = (A+B)^{-\half} (A+B)^{-\half}.$$
Now, it follows from Fact 1 that $A ( A+ B )^{-1}$ have the same eigenvalues of $(A+B)^{-\half}  A (A+B)^{-\half}$. Now we have,
\begin{align*}
I - (A+B)^{-\half} A (A+B)^{-\half} 
&= (A+B)^{-\half}  (A + B) (A+B)^{-\half} \\&- (A+B)^{-\half}  A (A+B)^{-\half} \\
&= (A+B)^{-\half}  B (A+B)^{-\half}
\end{align*}
which is still positive definite. Hence, it follows from Fact 2 that all eigenvalues of $(A+B)^{-\half}  A (A+B)^{-\half}$ are strictly less than 1 and so are the eigenvalues of $A ( A+ B )^{-1}$.
\end{proof}

%\begin{proof}
%We first recall the solution of the unconstrained least squares problem in Algorithm~\ref{alg:check} is given as:
%\begin{align*}
%x 	&= \left(\O_{\mathcal{I}}^T \O_{\mathcal{I}}\right)^{-1} \O_{\mathcal{I}}^T Y_{\mathcal{I}}
%	= \O_{\mathcal{I}}^{+} Y_{\mathcal{I}}
%\end{align*}
%where $\O_{\mathcal{I}}^{+} = \left(\O_{\mathcal{I}}^T \O_{\mathcal{I}}\right)^{-1} \O_{\mathcal{I}}^T$ is the Moore-Penrose pseudo inverse of $\O_{\mathcal{I}}$ and $\mathcal{I}$ is any subset of $\{1,\ldots,p\}$. Hence, the error $\norm{x^* - x}^2$ can be bounded as follows:
%\begin{align*}
%\norm{x^* - x}^2 	&= \norm{x^* - \O_{\mathcal{I}}^{+} Y_{\mathcal{I}}}^2 \\
%			&= \norm{\O_{\mathcal{I}}^{+} \Psi_{\mathcal{I}} - \O_{\mathcal{I}}^{+} E_{\mathcal{I}}}^2 \\
%			&\le 2\norm{\O_{\mathcal{I}}^{+}}^2 \norm{\Psi_{\mathcal{I}}}^2 + 2\norm{\O_{\mathcal{I}}^{+}}^2 \norm{E_{\mathcal{I}}}^2\\
%			&\le 2\overline{\mathcal{O} }\left( \frac{2}{1 - \Delta_{s}} + 1 \right) \norm{\Psi} + 2\overline{\mathcal{O} } \frac{\epsilon}{1 - \Delta_{s}},
%\end{align*}
%where the first inequality follows from the fact that only attacks satisfying~\eqref{} can be pass the satisfiability check and hence can affect the error bound.
%where
%
%\end{proof}

\end{document}